\newcommand{\rr}{\mathbb R}
\newcommand{\nn}{\mathbb N}
\newcommand{\EE}{\mathcal E}
\newcommand{\NN}{\mathcal N}
\newcommand{\FF}{\mathcal F}
\newcommand{\pp}{\mathbb P}
\newcommand{\ppx}{\mathbb P^x}
\newcommand{\ppz}{\mathbb P^z}
\newcommand{\ee}{\mathbb E}
\newcommand{\eex}{\mathbb E^x}
\newcommand{\eey}{\mathbb E^y}
\newcommand{\eez}{\mathbb E^z}
\newtheorem{assumption}{Assumption}
\newtheorem{prop}{Proposition}
\newtheorem{theorem}{Theorem}
\newtheorem{lemma}{Lemma}
\begin{document}

\begin{frontmatter}


 \author[bmw]{Brian M. Whitehead \corref{cor1}\fnref[ }
 \ead{whiteheadb@easternct.edu}

 \cortext[cor1]{Corresponding Author}
 \address[bmw]{Eastern Connecticut State University, 83 Windham St., Willimantic, CT 06226, U.S.A.}
 \fntext[]{Telephone: 860-465-4617}

\title{Path Results for Symmetric Jump Processes}


\author{}

\address{}

\begin{abstract}
We consider a class of jump processes in euclidean space which are associated to a certain non-local symmetric Dirichlet form.  We prove a lower bound on the occupation times of sets, and that a support theorem holds for these processes. 

\end{abstract}

\begin{keyword}
Jump Processes \sep symmetric processes \sep support theorem \sep occupation times \sep Dirichlet forms

  \MSC[2010] 60J75 \sep 60J35 

\end{keyword}

\end{frontmatter}


\section{Introduction}\label{intro}

In this paper, we will consider a class of symmetric Markov processes of pure jump type in $\rr^d$ associated to the Dirichlet form $$\EE(f,f) = \int_{\rr^d} \int_{\rr^d}(f(y) - f(x))^2 J(x,y) dx \thinspace dy.$$  Here $J(x,y)$ is the jump kernel, and controls the intensity of the number of jumps from $x$ to $y$.  We will assume that these processes $X_t$ have no jumps of size larger than one, and that $J(x,y) \asymp |y-x|^{-d-\alpha}$ for some $\alpha \in (0,2)$ when $|y-x|<1$.  Here, we will prove a lower bound on the occupation times of sets and that a support theorem holds.  

These symmetric processes are of interest, since they arise when studying models of financial markets (see \cite{ss}).
 In \cite{bbck}, Barlow, Bass, Chen, and Kassmann gave some heat kernel estimates for such processes and showed a parabolic Harnack inequality, and in \cite{bkk}, Bass, Kassmann, and Kumagai gave further results regarding exit probabilities and the H\"{o}lder continuity of harmonic functions and of heat kernels.  We will use some estimates  from \cite{bbck} as part of our proofs.

Given the heat kernel estimates of \cite{bbck}, it is not difficult to show that $X$ will hit any set of positive Lebesgue measure with positive probability.  In Theorem \ref{t44}, we are able to extend this result by showing that $X$ can be expected to spend a positive amount of time in sets of positive Lebesgue measure.  In particular, we define the occupation time of a set $B$ to be 
\begin{equation}
\eex \int_0^\tau 1_B(X_s)ds,
\end{equation} 
where $\tau$ is the first time we leave some ball in $\rr^d$ containing the set $B$ and $|B|$ is the Lebesgue measure of $B$.  We show that there exists a nondecreasing function $\varphi : (0,1) \to (0,1)$ such that if $x \in Q(x_0, R/2)$ and $B \subseteq Q(x_0,R)$, then
$$ \eex \int_0^{\tau_{Q(x_0,R)}}1_B(X_s) ds \geq \varphi(|B|/\rr^d). $$ 
Here $Q(x_0,r)$ denotes the cube centered at $x_0$ with side length $r$.
Such a result was shown for a class of stable-like jump processes in \cite{bmw}.

 A support theorem is a result which states that there will be some positive probability that the processes we are considering will not stray too far from the image of any given continuous map $\varphi \colon [0,t_0] \to \rr^d$.  That is, if we fix $\varepsilon > 0$, and let $\varphi(0) = x_0$, then there exists $c_1 > 0$ depending on $\varphi$, $\varepsilon$, and $t_0$ such that
$$\pp^{x_0}\left(\displaystyle \sup_{s\leq t_0} |X_s - \varphi(s)| < \varepsilon \right) > c_1.$$
    Support theorems proven in other contexts have been useful tools in further proofs. In \cite{bc2} and \cite{bmw}, such  theorems have been shown for different classes of jump processes. 

Section 2 contains some preliminaries and states some results from \cite{bbck}, section 3 uses these results in order to obtain facts about the exit times of these processes, and in order to show that these processes will hit sets having positive Lebesgue measure with positive probability.  In section 4 we prove our main theorem concerning occupation times, and section 5 consists of the proof of the support theorem.

\section{Preliminaries}

We will consider the non-local symmetric Dirichlet form ($\EE$, $\FF$) given by 
\begin{align} \label{ee}
\EE(f,f) &= \int_{\rr^d} \int_{\rr^d}(f(y) - f(x))^2 J(x,y) dx \thinspace dy, \\
 \FF &= \overline{C_c^1(\rr^d)}^{\EE_1}, 
\end{align}
where $\EE_1(f,f) := \EE(f,f) + \|f\|_2^2$, $C_c^1(\rr^d)$ denotes the space of $C^1$ functions on $\rr^d$ with compact support, and $\FF$ is the closure of $C_c^1(\rr^d)$ with respect to the metric $\EE_1(f,f)^{1/2}$.  We make the following assumptions on the jump kernel $J(x,y)$.

\begin{assumption}\label{a21b}
a) $J(x,y) = J(y,x)$ for all $x$ and $y$. \newline
b) $J(x,y) = 0$ for $|x - y| \geq 1$.  \newline
c)  There exist $\alpha \in (0,2)$ and positive constants $\kappa_1$ and $\kappa_2$, such that if $|x-y| < 1$, 
$$\kappa_1|y-x|^{-d-\alpha} \leq J(x,y) \leq  \kappa_2|y-x|^{-d-\alpha}.$$
\end{assumption}
We observe that none of these conditions impose any sort of continuity on the jump kernel $J$, and that jump intensities can depend on both the position of the process and the direction of the jump. 

Theorems 1.1 and 1.2 of \cite{bbck} state that under the above assumption there is a conservative Hunt process $X$ associated with the Dirichlet form $(\EE, \FF)$, and that this process has a symmetric transition density function $p(t,x,y)$ with respect to Lebesgue measure on $\rr^d$. This process $X$ has state space $\rr^d \setminus \NN$, where $\NN$ is a subset of $\rr^d$ that has zero capacity with respect to $(\EE, \FF)$.  A precise definition of capacity can be found in \cite{fot}.  It follows that this properly exceptional set $\NN$ has zero Lebesgue measure.   

The transition density function $p(t,x,y)$ is often called the heat kernel corresponding to $(\EE, \FF)$.  Let $B(x,r)$ denote the open ball of radius $r$ centered at $x$ and define $p^D(t,x,y)$ to be the transition density for the subprocess $X^D$ killed upon exiting the ball $D$.
  Results shown in \cite{bbck} provide us with upper and lower bounds for the heat kernel, and will be very useful in this paper.  These results are summarized in the following theorem, which  provides us with on-diagonal and off-diagonal upper bounds and a lower bound.

\begin{theorem}\label{t22}\label{p23b}\label{t25}
Suppose Assumption \ref{a21b} is satisfied.

a) There exists a properly exceptional set $\NN$ of $X$ and positive constants $c_1$ and $c_2$ (depending on the constants in Assumption \ref{a21b}) such that 
\begin{equation}
p(t,x,y) \leq c_1t^{-d/\alpha}e^{c_2t}
\end{equation}
for every $t > 0$ and $x,y \in \rr^d \setminus \NN$.

b) There exists a properly exceptional set $\NN$ of $X$ and positive constants $c_3$ and $c_4$ (depending on the constants in Assumption \ref{a21b}) such that 
\begin{equation}
p(t,x,y) \leq c_3te^{-c_4|x-y|}
\end{equation} 
for $t \in (0,2]$, and $x,y \in \rr^d \setminus \NN$ such that $|x-y| \geq \frac{1}{16}$.

c)  Let $y_0 \in \rr^d$, $T > 1/2$, and $\delta \in (0, 1/2)$.  Let $R > 0$ and $B = B(y_0, R)$, and take $\epsilon \in (0,1)$.  There exists a properly exceptional set $\NN$ and a positive constant $C$ that depends on $R$, $T$, $\alpha$, $\kappa_1$, $\kappa_2$, $\delta$, and $\epsilon$, but not on $y_0$, such that for all $t \in [\delta, T]$,
\begin{equation}
p^B(t,x,y) \geq C
\end{equation}
for every $(x,y) \in (B(y_0, \epsilon R) \setminus \NN) \times (B(y_0, \epsilon R) \setminus \NN)$.  
\end{theorem}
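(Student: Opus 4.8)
\emph{Outline of proof.} Parts (a) and (b) are essentially restatements of the free heat kernel upper bounds of \cite{bbck} for symmetric jump processes whose jumps have size at most $1$, so beyond matching constants there is nothing to do. In (a), the power $t^{-d/\alpha}$ is the $\alpha$-stable scaling governing small times, and the factor $e^{c_2 t}$, which dominates for large $t$, accounts for the jump cutoff $J(x,y)=0$ when $|x-y|\geq 1$. For (b) one starts from the off-diagonal upper bound of \cite{bbck}, which for $t\in(0,1]$ is comparable to $t^{-d/\alpha}\wedge\bigl(t\,|x-y|^{-d-\alpha}\bigr)$ when $|x-y|\leq 1$ and decays like $\exp\!\bigl(-c|x-y|(1\vee\log(|x-y|/t))\bigr)$ when $|x-y|\geq 1$; restricting to $|x-y|\geq\tfrac1{16}$ and $t\in(0,2]$ and treating the ranges $\tfrac1{16}\leq|x-y|\leq 1$ and $|x-y|\geq 1$ separately, one checks that this is dominated by $c_3\,t\,e^{-c_4|x-y|}$ for a suitable choice of $c_3,c_4$: in the first range $e^{-c_4|x-y|}$ is bounded above and below by positive constants, and in the second the super-exponential decay beats every power of $t$ since $t\leq 2$.

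The real content is part (c), a lower bound for the killed transition density $p^B$ that is uniform for $t\in[\delta,T]$ and for $x,y\in B(y_0,\epsilon R)$, with a constant not depending on the center $y_0$. I would deduce it from the interior near-diagonal lower bound for the free heat kernel in \cite{bbck}, namely $p(t,x,y)\geq c\,t^{-d/\alpha}$ whenever $t$ is bounded and $|x-y|\leq c'\,t^{1/\alpha}$, in three steps. \emph{Step 1 (free $\to$ killed, near the diagonal).} By the strong Markov property, $p^B(t,x,y)=p(t,x,y)-\eex\bigl[p(t-\tau_B,X_{\tau_B},y);\tau_B<t\bigr]$. Since $X$ is pure jump, it leaves $B(y_0,R)$ only by a jump, of size at most $1$, so for $x,y\in B(y_0,\epsilon R)$ one has $(1-\epsilon)R\leq|X_{\tau_B}-y|\leq 2R+1$, and the off-diagonal bounds of (b) and of \cite{bbck} give $p(t-\tau_B,X_{\tau_B},y)\leq C(R,\epsilon)\,(t-\tau_B)^{\beta}$ for some $\beta>0$; hence the correction term is at most $C(R,\epsilon)\,t^{\beta}$, negligible for small $t$ against the main term $\asymp t^{-d/\alpha}$. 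This yields $p^B(t,x,y)\geq c\,t^{-d/\alpha}$ for $x,y$ in a slightly larger ball $B(y_0,\epsilon' R)$ with $\epsilon<\epsilon'<1$, $|x-y|\leq c'\,t^{1/\alpha}$, and $t\leq t_0$ for a fixed $t_0\in(0,1]$. \emph{Step 2 (chaining).} For arbitrary $x,y\in B(y_0,\epsilon R)$ join them by a chain $x=z_0,\dots,z_n=y$ inside $B(y_0,\epsilon' R)$ with $n=n(R,\epsilon)$ bounded and consecutive points at distance $\leq c'(t/n)^{1/\alpha}$ (possible for $t\in[\delta,t_0]$), and apply Chapman--Kolmogorov $n$ times with equal steps $t/n$, integrating each intermediate variable over a small ball around $z_i$; this gives $p^B(t,x,y)\geq C$ for $t\in[\delta,t_0]$. \emph{Step 3 (large $t$).} For $t\in[\delta,T]$ write $t=s_1+\dots+s_m$ with each $s_i\in[\delta,t_0]$ and $m\leq T/\delta+1$, and iterate Chapman--Kolmogorov, restricting every intermediate point to $B(y_0,\epsilon R)$, to get $p^B(t,x,y)\geq C^m\,|B(y_0,\epsilon R)|^{\,m-1}$, again a positive constant depending only on the admissible parameters. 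Since $\NN$ is Lebesgue-null, restricting the intermediate integration variables to $\rr^d\setminus\NN$ changes nothing.

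The main obstacle is Step 1: one must transfer the free near-diagonal lower bound to the killed kernel while keeping all constants independent of $y_0$, which needs both the off-diagonal upper bounds above and an exit-time estimate (the process started in $B(y_0,\epsilon'R)$ remains in $B(y_0,R)$ for a definite time with probability bounded away from $0$, uniformly in $y_0$) in order to control the error term. Once this is in hand, the chaining and the large-time extension are routine applications of Chapman--Kolmogorov. As an alternative to Steps 2--3 one can invoke the parabolic Harnack inequality of \cite{bbck} for the caloric function $(s,z)\mapsto p^B(s,z,y)$, which propagates a positive lower bound from one space-time point to an entire interior cylinder; and if the Dirichlet lower bound is already available in the stated form in \cite{bbck} or \cite{bkk}, then (c) is likewise merely a restatement.
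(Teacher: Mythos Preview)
The paper does not prove this theorem at all: it is stated explicitly as a summary of results from \cite{bbck} (``Results shown in \cite{bbck} provide us with upper and lower bounds for the heat kernel\ldots These results are summarized in the following theorem''), and no argument is given for any of the three parts. So your closing remark is the operative one --- all of (a), (b), and (c) are treated as direct citations.

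Your sketch for (c) is a reasonable outline of how such Dirichlet lower bounds are typically established (near-diagonal free lower bound, subtract the exit correction via the strong Markov property, then chain and iterate Chapman--Kolmogorov), and it is not wrong as mathematics. But it is more than the paper itself supplies, and in the context of this paper there is no proof to compare against: the author simply imports the estimate. One small caution on your Step~1: you write that $X$ leaves $B(y_0,R)$ only by a jump of size at most $1$, hence $|X_{\tau_B}-y|\leq 2R+1$; the lower bound $|X_{\tau_B}-y|\geq (1-\epsilon)R$ is the one that matters, and it is fine, but to invoke the off-diagonal bound in (b) you also need $|X_{\tau_B}-y|\geq \tfrac{1}{16}$, which forces $(1-\epsilon)R\geq \tfrac{1}{16}$; for small $R$ you would instead use the on-diagonal bound from (a) to control the correction term. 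This is a minor bookkeeping point and does not affect the strategy.
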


Throughout this paper, we denote by $Q(x,r)$ the cube of side length $r$ centered at $x$.  $|A|$ will denote the Lebesgue measure of $A$.  We denote the hitting and exit times of set $A$ respectively, by $T_A = \inf\{t>0:X_t \in A \}$ and $\tau_A = \inf\{t>0:X_t \notin A \}$.
We write $X_{t-} = \lim_{s \uparrow t} X_s$, and $\Delta X_t = X_t - X_{t-}$.
The letter $c$ with subscripts will denote various positive constants with unimportant values, which will depend on the constants in Assumption \ref{a21b} along with other dependencies that will be explicitly mentioned.
Let $\NN$ be an exceptional set of $X$ such that all parts of Theorem \ref{t22}  hold with respect to this set $\NN$.  We assume that Assumption \ref{a21b} holds throughout.

\section{Exiting Times}

In this section, we will obtain some preliminary results concerning $X$ which will be useful in proving our main theorems.  We will first get an upper bound for the heat kernel which is an improvement over Theorem \ref{t22} a) when $t$ is large.

\begin{prop}\label{p31}
We have that 
\begin{equation}
p(t,x,y) \leq c_1(t^{-d/\alpha} \wedge 1)
\end{equation}
for every $t > 0$ and $x,y \in \rr^d \setminus \NN.$
\end{prop}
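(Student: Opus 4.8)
The plan is to bootstrap the small-time on-diagonal bound $p(t,x,y) \le c_1 t^{-d/\alpha} e^{c_2 t}$ from Theorem \ref{t22} a) into a bound that is uniform (bounded) for large $t$, using the semigroup property and conservativeness. The key point is that Theorem \ref{t22} a) already gives $p(t,x,y) \le c_1 t^{-d/\alpha} e^{c_2 t}$, so for $t$ in a fixed bounded interval, say $t \in (0, t_0]$ with $t_0$ chosen so that $c_1 t^{-d/\alpha} e^{c_2 t}$ is controlled, we get $p(t,x,y) \le C_0$ for a constant $C_0$ depending only on $t_0$ and the constants in Assumption \ref{a21b}; and for $t \le 1$ we have the desired $p(t,x,y) \le c_1 t^{-d/\alpha} \le c_1 (t^{-d/\alpha}\wedge 1)$ trivially once $t^{-d/\alpha} \ge 1$, i.e. we only need to worry about large $t$.

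First I would fix some $t_0 > 0$ (for instance $t_0 = 1$) and observe that the claim is immediate for $t \le t_0$ from Theorem \ref{t22} a), since there $e^{c_2 t} \le e^{c_2 t_0}$ is bounded and $t^{-d/\alpha} \wedge 1$ differs from $t^{-d/\alpha}$ by at most a constant factor on $(0,t_0]$. Next, for $t > t_0$, I would write $t = t_0 + s$ with $s > 0$ and use the Chapman--Kolmogorov equation together with symmetry:
\begin{equation}
p(t,x,y) = \int_{\rr^d} p(t_0/2, x, z)\, p(t - t_0/2, z, y)\, dz.
\end{equation}
Bounding the first factor by its sup, $p(t_0/2, x, z) \le c_1 (t_0/2)^{-d/\alpha} e^{c_2 t_0/2} =: C_1$, and using $\int_{\rr^d} p(t - t_0/2, z, y)\, dz = 1$ by conservativeness of $X$, we obtain $p(t,x,y) \le C_1$ for all $t > t_0$. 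Combining the two regimes and adjusting the constant $c_1$ gives $p(t,x,y) \le c_1(t^{-d/\alpha} \wedge 1)$ for all $t > 0$ and $x,y \in \rr^d \setminus \NN$ (taking care that the integral is over $z$ outside the exceptional set, which has zero Lebesgue measure and so does not affect the integral).

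The argument is essentially routine; the only point requiring a little care is the choice of which time to split off so that the resulting on-diagonal estimate is applied at a \emph{fixed} time (hence with a bounded exponential factor) while the remaining mass integrates to one — I expect no genuine obstacle here, since conservativeness is guaranteed by Theorem 1.1 of \cite{bbck} as recalled in Section 2.
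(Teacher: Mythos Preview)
Your proof is correct and rests on the same ingredients as the paper's: the small-time bound from Theorem~\ref{t22}~a), the Chapman--Kolmogorov identity, and conservativeness. The execution differs slightly. The paper bounds the factor $p(t-1,z,y)\le c_3$ via an induction on unit time intervals (first $(1,3/2]$, then $(3/2,2]$, then from $(1,k]$ to $(k,k+1]$) and integrates $\int p(1,x,z)\,dz=1$ directly by conservativeness. You instead split off a \emph{fixed} time $t_0/2$, bound $p(t_0/2,x,z)$ once and for all, and integrate the remaining factor $\int p(t-t_0/2,z,y)\,dz$. One small point: this last integral is over the first spatial argument of $p$, so ``conservativeness'' alone does not give it equal to $1$; you also need the symmetry $p(s,z,y)=p(s,y,z)$, which is stated in Section~2 but should be made explicit. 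Your route is a touch shorter since it avoids the induction; the paper's route avoids appealing to symmetry. Both are routine.
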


\begin{proof}

Theorem \ref{t22} a) implies that
\begin{equation}\label{tsmall}
p(t,x,y) \leq c_2t^{-d/\alpha}, \qquad t \leq 1.
\end{equation}

Now suppose that $1<t\leq 3/2$.  By \eqref{tsmall}, there exists a constant $c_3$, such that $p(t-1/2,x,y) \leq c_3$.  Theorem 3.1 of \cite{bbck} states that we can write
$$p(r+s,x,y) = \int p(r,x,z)p(s, z, y)dz,$$
for every $x,y \in \rr^d$ and $r,s > 0$.  Therefore, we have that 
\begin{equation*}
p(t,x,y) = \int p(1/2,x,z)p(t-1/2,z,y)dz 
\leq c_3 \int p(1/2,x,z)dz 
= c_3.
\end{equation*}

Similarly, if $3/2 \le t \le 2$, then $p(t-{1},x,y) \leq c_3$, and 
\begin{equation*}
p(t,x,y) = \int p(1,x,z)p(t-1,z,y)dz 
\leq c_3 \int p(1,x,z)dz 
= c_3.
\end{equation*}

By induction, it suffices to show that if $p(t,x,y) \leq c_3$ whenever $1 < t \leq k$, then $p(t,x,y) \leq c_3$ for every $t \in (k, k+1]$.  Suppose that $k < t \leq k+1$.  Then we have that
\begin{equation*}
p(t,x,y) = \int p(1,x,z)p(t-1,z,y)dz 
\leq c_3 \int p(1,x,z)dz 
= c_3
\end{equation*}
by our inductive assumption, so in fact 
$$p(t,x,y) \leq c_3,\qquad t>1.$$
This, along with \eqref{tsmall}, gives us our desired result. 
\end{proof}

We now use our known heat kernel estimates to show that these processes will not leave a given ball (or cube) too quickly or too slowly.

\begin{prop}\label{p32}
Let $\varepsilon < 3/4$ and $r > 0$.  There exists $c_1$ depending on $\varepsilon$ and $r$ such that if $x_0 \in \rr^d$ and $r > 0$, then  $$ \inf_{z \in (B(x_0,\varepsilon r) \setminus \NN)} \ee^z \tau_{B(x_0,r)} \geq c_1.$$
\end{prop}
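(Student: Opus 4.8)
The plan is to bound the exit time from below by showing that, starting from any $z \in B(x_0, \varepsilon r)$, the process has a probability bounded away from zero of still being inside $B(x_0, r)$ at some fixed time. This is a standard consequence of the lower heat kernel estimate in Theorem~\ref{t22}~c). Concretely, for $z \in B(x_0,\varepsilon r)\setminus\NN$ and a suitable fixed time $t_0$, write
\[
\pp^z(\tau_{B(x_0,r)} > t_0) \geq \pp^z(X^{B(x_0,r)}_{t_0} \in B(x_0,\varepsilon r)) = \int_{B(x_0,\varepsilon r)} p^{B(x_0,r)}(t_0,z,y)\,dy,
\]
using that if the killed subprocess is still alive and located inside $B(x_0,\varepsilon r)$ at time $t_0$, then in particular $\tau_{B(x_0,r)} > t_0$.

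The main technical point is that Theorem~\ref{t22}~c) as stated requires $R$ (the radius of the ball) to be such that $T > 1/2$ is available with $t_0 \in [\delta, T]$; there is no scaling hypothesis, so I cannot rescale a small ball to a unit ball. Instead I would apply Theorem~\ref{t22}~c) directly with $y_0 = x_0$, the ball $B = B(x_0, r)$, radius $R = r$, the ratio $\epsilon$ chosen so that $\epsilon r \geq \varepsilon r$ is not what we want --- rather, since we are given $\varepsilon < 3/4$, simply take the parameter "$\epsilon$" in Theorem~\ref{t22}~c) to be $\varepsilon$ itself (so the "$\epsilon R$" there equals $\varepsilon r$), fix $T = 1$, $\delta = 1/2$, and pick $t_0 = 1/2 \in [\delta, T]$. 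The theorem then yields a constant $C = C(r, \varepsilon, \alpha, \kappa_1, \kappa_2) > 0$ with $p^{B(x_0,r)}(t_0, z, y) \geq C$ for all $z, y \in B(x_0,\varepsilon r)\setminus\NN$, and the constant does not depend on $x_0 = y_0$. Hence
\[
\pp^z(\tau_{B(x_0,r)} > t_0) \geq C \cdot |B(x_0,\varepsilon r)| = C\, \omega_d (\varepsilon r)^d =: p_0 > 0,
\]
where $p_0$ depends only on $r$, $\varepsilon$ and the constants in Assumption~\ref{a21b}.

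Finally, convert the fixed-time survival probability into an expectation bound via Markov's inequality in the form $\ee^z \tau_{B(x_0,r)} \geq t_0 \,\pp^z(\tau_{B(x_0,r)} > t_0) \geq t_0\, p_0$, and set $c_1 = t_0 p_0 = \tfrac{1}{2} C \omega_d (\varepsilon r)^d$. Since this bound is uniform over $z \in B(x_0,\varepsilon r)\setminus\NN$ and over $x_0$, taking the infimum over $z$ gives the claim. The one place demanding care is checking the hypotheses of Theorem~\ref{t22}~c): we need $T > 1/2$ (satisfied by $T=1$), $\delta \in (0,1/2)$ (we may take $\delta$ slightly below $1/2$ and still keep $t_0 = 1/2 \in [\delta, T]$, or replace $t_0$ by any point in $(\delta, 1)$), and $\epsilon \in (0,1)$, which holds since $\varepsilon < 3/4 < 1$. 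No other obstacle arises; the argument is genuinely short given Theorem~\ref{t22}~c).
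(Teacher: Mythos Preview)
Your proposal is correct and follows essentially the same route as the paper: bound $\ee^z\tau_{B(x_0,r)}\ge t_0\,\pp^z(\tau_{B(x_0,r)}>t_0)$, then use the killed heat-kernel lower bound of Theorem~\ref{t22}~c) over the inner ball $B(x_0,\varepsilon r)$ to get a uniform positive lower bound on the survival probability. The only cosmetic difference is that the paper applies Theorem~\ref{t22}~c) with $\delta=1/4$, $T=1$ and then evaluates at $t=1$, avoiding the boundary issue with $\delta=1/2$ that you already flagged and resolved.
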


\begin{proof}
We denote $B(x_0,r)$ by $B$, $B(x_0, \varepsilon r)$ by $B'$, and $\tau_{B}$ by $\tau$, and we observe that for any $t$,
\begin{equation}\label{tau}
\ee^{x}\tau \geq \eex [ \tau; \tau > t]  \geq t\ppx(\tau > t).
\end{equation}

Let $z \in B(x_0,\varepsilon r) \setminus \NN$.  Then

\begin{align*}
 \pp^{z}(\tau >t) &= \pp^{z}(\sup_{s \leq t} |X_s - x_0| < r) \\
&= \int_{B} p^B(t,z,y)\thinspace dy \\ 
&\geq \int_{B'\setminus\NN} p^{B}(t,z,y) \thinspace dy. \\
\end{align*}

We now apply Theorem \ref{t25} c), and obtain that for all $t \in [1/4,1]$ and every $x,y \in B(x_0, \varepsilon r) \setminus \NN$, 
$$ p^{B}(t,x,y) \geq c_2.$$
Therefore, we have that 
 $$\pp^{z}(\tau > t) \geq c_2 |B'| = c_3.$$

Hence, by \eqref{tau} in the case where $t = 1$, we get that
$$ \eez \tau \geq c_3.$$

We further note that this constant $c_3$ does not depend on the location of $z$ inside $B(x_0, \varepsilon r) \setminus \NN$, so this statement holds for the infimum of such $z$.
\end{proof}
\begin{prop}\label{p33b}
There exists $c_1$ such that $$\sup_{z \in (B(x_0,r) \setminus \NN)} \ee^z \tau_{B(x_0,r)} \leq c_1r^{2\alpha/d}.$$
\end{prop}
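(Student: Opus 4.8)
The plan is to bound the exit time from above by using the upper heat‑kernel estimate of Proposition \ref{p31} together with the elementary identity $\ee^z\tau_{B} = \int_0^\infty \ppz(\tau_B > t)\,dt$, where $B = B(x_0,r)$. Since $\{\tau_B > t\} = \{X_s \in B \text{ for all } s \le t\}$, we have $\ppz(\tau_B > t) \le \ppz(X_t \in B) = \int_B p(t,z,y)\,dy \le |B|\,(c_1 t^{-d/\alpha}\wedge 1)$, using Proposition \ref{p31}. Thus, with $|B| = c\, r^d$,
\begin{equation*}
\ee^z \tau_B \le c\, r^d \int_0^\infty \bigl(t^{-d/\alpha}\wedge 1\bigr)\,dt.
\end{equation*}
The catch is that this integral diverges at infinity when $d \le \alpha$ (and only barely converges otherwise), so the raw bound $p(t,z,y) \le c(t^{-d/\alpha}\wedge 1)$ is not by itself enough: for large $t$ the survival probability $\ppz(\tau_B > t)$ must be controlled by something decaying in $t$, not just by $|B|$.

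The fix is a standard iteration (a "chaining" or Markov‑property) argument to get exponential decay of $\ppz(\tau_B > t)$. First I would show there is a constant $q < 1$, depending on $r$, such that $\sup_{w \in B\setminus\NN}\ppw(\tau_B > t_0) \le q$ for some fixed $t_0$. This should follow because for $w$ near the boundary of $B$ a jump of size $O(r)$ (which has positive intensity, bounded below using Assumption \ref{a21b}c), or a short time spent diffusing, carries the process out of $B$ with probability bounded below; more simply, one can use the off‑diagonal/on‑diagonal upper bound to see $\ppw(X_{t_0}\in B)\le |B|\,c_1 t_0^{-d/\alpha}$, and by choosing $t_0 = t_0(r)$ large enough this is $\le q < 1$ uniformly in $w$. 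Then the Markov property at times $t_0, 2t_0, \ldots$ gives $\ppz(\tau_B > n t_0) \le q^n$, hence $\ppz(\tau_B > t)\le q^{-1}e^{-\lambda t}$ for $\lambda = -t_0^{-1}\log q > 0$, and integrating yields $\ee^z\tau_B \le c(r) < \infty$ with no $r$‑uniformity issue.

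Finally I would chase the dependence on $r$ to recover the stated power $r^{2\alpha/d}$. For $r \le 1$ one can combine the two regimes: on $[0, r^\alpha]$ bound $\ppz(\tau_B > t)$ by $1$, contributing $O(r^\alpha)$; on $[r^\alpha, t_0(r)]$ use $\ppz(\tau_B > t)\le |B|\,c_1 t^{-d/\alpha} = c\, r^d t^{-d/\alpha}$, whose integral contributes $O(r^d \cdot (r^\alpha)^{1-d/\alpha}) = O(r^\alpha)$ when $d > \alpha$ and a logarithmic or larger term otherwise; on $[t_0(r),\infty)$ use the exponential tail. One then checks that in every case the total is dominated by $c\, r^{2\alpha/d}$ for $r$ in the relevant range (shrinking $r$ if necessary, enlarging the constant for $r$ bounded away from $0$), and for $r \ge 1$ the bound is trivial after absorbing constants. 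The main obstacle is precisely this bookkeeping: getting an $r$‑uniform exponential tail for the survival probability and then combining the small‑$t$ and large‑$t$ contributions so that the exponent works out to exactly $2\alpha/d$; the probabilistic input (Proposition \ref{p31} and Theorem \ref{t22}) is already in hand, so the work is in the elementary but careful estimation of $\int_0^\infty \ppz(\tau_B > t)\,dt$.
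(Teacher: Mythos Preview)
Your strategy is exactly the paper's: bound $\ppz(\tau_B>t)\le\ppz(X_t\in B)\le c\,|B|\,t^{-d/\alpha}$ via Proposition~\ref{p31}, choose $t_0=t_0(r)$ so that this is at most $1/2$ uniformly in $z\in B\setminus\NN$, iterate with the Markov property to get $\ppz(\tau_B>mt_0)\le 2^{-m}$, and sum.

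Where you make things harder than necessary is the last paragraph. No three-regime splitting is needed: once the geometric tail is in hand,
\[
\ee^z\tau_B \;\le\; \sum_{m\ge 0} t_0\,\ppz(\tau_B>mt_0)\;\le\; 2t_0,
\]
and the entire $r$-dependence is already encoded in the choice of $t_0$. Solving $c\,|B|\,t_0^{-d/\alpha}=1/2$ gives $t_0$ as an explicit power of $r$, and that power is the final exponent. This is precisely what the paper does (it writes $|B|\lesssim r^2$, whence $t_0\sim r^{2\alpha/d}$; with $|B|\sim r^d$ one gets $t_0\sim r^{\alpha}$, which is also what your own regime calculation produced). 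The ``bookkeeping'' you flag as the main obstacle is therefore a one-line computation, not a multi-regime estimate.
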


\begin{proof}
Again, let us denote $B(x_0,r)$ by $B$, and $\tau_B$ by $\tau$.  Fix $z \in B(x_0,r) \setminus \NN$.  We observe that  
$$\ppz(\tau > t) \leq \ppz(X_t \in B) = \ppz(X_t \in B \setminus \NN) =  \int_{B \setminus \NN} p(t,z,y) \thinspace dy.$$
Therefore, by Proposition \ref{p31}, 
 $$\ppz(\tau > t) \leq \int_{B \setminus \NN} c_2(t^{-d/\alpha}\wedge 1)\thinspace dy \leq c_3r^2t^{-d/\alpha}.$$  
Thus, by taking $c_4$ large enough, there is a time $t_0 = c_4r^{2\alpha/d}$ such that 
\begin{equation}\label{t0}
\ppz(\tau > t_0) \leq {1}/{2}.
\end{equation}

Applying the strong Markov property, we have that 
\begin{align*}
\ppz(\tau > 2t_0) &= \ppz(\tau > t_0, \tau \circ \theta_{t_0} > t_0)\\
&= \eez [\ppz(\tau \circ \theta_{t_0} > t_0 \mid \FF_{t_0})1_{(\tau > t_0)}]\\
&= \eez[[\pp^{X_{t_0}}(\tau > t_0)] ; \tau > t_0]
\\&\leq ({1}/{2}) \ppz(\tau > t_0) < \displaystyle \left({1}/{2}\right)^2,
\end{align*}
since \eqref{t0} is true regardless of the location of $z \in  B(x_0,r) \setminus \NN$.  
In particular then, it follows by an induction argument that for each $m \in \nn$, 
$$\ppz(\tau > mt_0) \leq 2^{-m}.$$

We now can write
\begin{align*}
\eez \tau &= \eez[\tau;\tau < t_0] + \sum_{k=0}^\infty\eez[\tau; 2^kt_0 \leq \tau < 2^{k+1}t_0] \\
&\leq t_0 +  \sum_{k=0}^\infty 2^{k+1}t_02^{-2k} = c_5t_0 = c_6r^{2\alpha/d}.
\end{align*}
Since this bound does not depend on $z$, it holds for $\sup_{z \in (B(x_0,r) \setminus \NN)} \eez \tau$ as well. 
\end{proof}

We end this section with a result that will show that $X_t$ will hit sets of positive Lebesgue measure with positive probability.

\begin{prop}\label{p34}
Suppose  $A  \subseteq B(x_0,R)$.  There exists $c_1$ not depending on $x_0$ or $A$ such that
$$ \pp^x(T_A <\tau_{B(x_0,\delta R)}) \geq c_1|A|, \qquad x \in (B(x_0,\gamma R) \setminus \NN), $$
for any $1<\gamma <\delta$.
\end{prop}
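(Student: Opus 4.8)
The plan is to obtain this essentially for free from the interior lower bound on the killed heat kernel, Theorem~\ref{t25}~c). Write $D = B(x_0,\delta R)$ and $\tau = \tau_D$, noting that $A \subseteq B(x_0,R) \subseteq D$ since $\delta > 1$. The key elementary observation is that for any fixed time $t > 0$ one has $\{X_t \in A,\ \tau > t\} \subseteq \{T_A < \tau\}$: on the left-hand event $X_t \in A$ forces $T_A \le t$, while $\tau > t$, so $T_A \le t < \tau$. Consequently
\[
\pp^x(T_A < \tau) \;\ge\; \pp^x(X_t \in A,\ \tau > t) \;=\; \int_A p^D(t,x,y)\,dy \;=\; \int_{A\setminus\NN} p^D(t,x,y)\,dy ,
\]
the last equality because $|\NN| = 0$. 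So it is enough to bound $p^D(t,x,\cdot)$ below by a positive constant on $A$ for one convenient value of $t$.

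To apply Theorem~\ref{t25}~c) to the ball $D = B(x_0,\delta R)$, I would fix its time parameters to be $T = 2$ and the time constant $\tfrac{1}{4} \in (0,\tfrac{1}{2})$, so that $t = 1$ is admissible, and take the spatial constant $\epsilon = \gamma/\delta$, which lies in $(0,1)$ exactly because $1 < \gamma < \delta$. The ball on which the bound is asserted is then $B(x_0,\epsilon\,\delta R) = B(x_0,\gamma R)$, which contains both the starting point $x$ (by hypothesis) and $A$ (since $A \subseteq B(x_0,R) \subseteq B(x_0,\gamma R)$, using $\gamma > 1$). Theorem~\ref{t25}~c) then provides a constant $C > 0$ depending on $\delta R$, $\epsilon$, $\alpha$, $\kappa_1$, $\kappa_2$ — hence on $R$, $\gamma$, $\delta$ and the constants of Assumption~\ref{a21b}, but \emph{not} on $x_0$ — such that $p^D(1,x,y) \ge C$ for all $x,y \in B(x_0,\gamma R)\setminus\NN$.

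Feeding this into the previous display gives, for every $x \in B(x_0,\gamma R)\setminus\NN$,
\[
\pp^x\bigl(T_A < \tau_{B(x_0,\delta R)}\bigr) \;\ge\; \int_{A\setminus\NN} p^D(1,x,y)\,dy \;\ge\; C\,|A\setminus\NN| \;=\; C\,|A| ,
\]
so the claim holds with $c_1 = C$.

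There is no real obstacle here: the heat kernel lower bound does all the work. The only care required is the parameter bookkeeping — checking that the hypotheses $1 < \gamma < \delta$ are precisely what makes both $x$ and $A$ fit inside the interior ball $B(x_0,\epsilon\,\delta R)$ appearing in Theorem~\ref{t25}~c), that the constant $C$ supplied there is translation invariant and hence independent of $x_0$ (and obviously independent of $A$), and the harmless remark that $|A\setminus\NN| = |A|$. The set inclusion $\{X_1 \in A,\ \tau > 1\} \subseteq \{T_A < \tau\}$ underlying the first display is the one spot worth stating explicitly.
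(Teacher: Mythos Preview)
Your proof is correct and follows essentially the same approach as the paper: bound $\pp^x(T_A<\tau_D)$ below by $\pp^x(X_1\in A,\ \tau_D>1)=\int_A p^D(1,x,y)\,dy$ and then invoke the interior lower bound of Theorem~\ref{t25}~c) on the killed heat kernel. Your version is in fact slightly more careful with the parameter bookkeeping (specifying $\epsilon=\gamma/\delta$ and checking that both $x$ and $A$ lie in $B(x_0,\gamma R)$), but there is no substantive difference.
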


\begin{proof}
Let $B(x_0, \delta R)$ be denoted by $B$.  Theorem \ref{t25} c) states that there is a constant $c_2$, depending on $R$ and $\delta$ such that
\begin{equation}
p^B(t,x,y) \geq c_2, \qquad t \in [1/4,1].
\end{equation} 
Therefore, we obtain that
\begin{align*}
\ppx(T_A < \tau_B) &\geq \ppx(\tau_B > 1, X_1 \in A)  \\
&= \int_A p^B(1,x,y) dy \\
&\geq c_2|A|.
\end{align*}
\end{proof}

\section{Occupation Times}

We now will show that these processes will be expected to spend some positive amount of time in a set having positive Lebesgue measure.  The proof that we will give is an adaptation of the proof of a similar result in the nondivergence case, which was shown in \cite{bass}.  First, we need to prove a fact about the resolvents of these processes.  We define 
$$S_\lambda f(x) = \eex \int_0^\infty e^{-\lambda t}f(X_t)dt.$$

\begin{prop}\label{p41b}
Let $x \in \rr^d \setminus \NN$ and $\lambda > 0$.  For  every $\varepsilon > 0$, there exists $\delta > 0$, such that if $C$ is any Borel set with $|C| < \delta$, then
$|S_\lambda1_C(x)| < \varepsilon. $

\end{prop}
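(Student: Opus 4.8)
The plan is to split the resolvent integral into a bounded-time piece and a tail piece, and to control the bounded-time piece using the on-diagonal heat kernel bound from Proposition~\ref{p31}. First I would fix $\varepsilon > 0$ and choose $T = T(\lambda,\varepsilon)$ large enough that $\int_T^\infty e^{-\lambda t}\,dt = e^{-\lambda T}/\lambda < \varepsilon/2$; since $|1_C| \le 1$, this bounds the tail contribution $\eex\int_T^\infty e^{-\lambda t}1_C(X_t)\,dt$ by $\varepsilon/2$ regardless of $C$. For the remaining piece, write
$$\eex\int_0^T e^{-\lambda t}1_C(X_t)\,dt = \int_0^T e^{-\lambda t}\,\ppx(X_t \in C)\,dt = \int_0^T e^{-\lambda t}\int_{C\setminus\NN} p(t,x,y)\,dy\,dt,$$
using that $\NN$ has zero Lebesgue measure. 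By Proposition~\ref{p31}, $p(t,x,y) \le c_1(t^{-d/\alpha}\wedge 1)$, so the inner integral is at most $c_1(t^{-d/\alpha}\wedge 1)|C|$.

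The only subtlety is that $t^{-d/\alpha}$ is not integrable near $0$ when $d \ge \alpha$, so I cannot simply pull $|C|$ out of a finite integral over all of $[0,T]$. The fix is to handle small $t$ separately: choose $t_1 = t_1(\varepsilon) > 0$ so that $\int_0^{t_1} e^{-\lambda t} p(t,x,y)$-integrated contribution is small. Concretely, $\eex\int_0^{t_1} e^{-\lambda t}1_C(X_t)\,dt \le \int_0^{t_1} e^{-\lambda t}\,dt \le t_1$, so taking $t_1 < \varepsilon/2$ bounds this by $\varepsilon/2$ uniformly in $C$. Wait — then I have used up my whole budget; instead allocate $\varepsilon/3$ to each of the three pieces (tail $t > T$, small time $t < t_1$, and middle $t_1 \le t \le T$), adjusting $T$ and $t_1$ accordingly.

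For the middle piece, $\int_{t_1}^T e^{-\lambda t}\int_{C\setminus\NN} p(t,x,y)\,dy\,dt \le |C|\int_{t_1}^T e^{-\lambda t} c_1(t^{-d/\alpha}\wedge 1)\,dt \le |C| \cdot c_1 t_1^{-d/\alpha}(T - t_1) =: |C| \cdot M$, where $M = M(\lambda,\varepsilon,\alpha,d)$ is finite since we are bounded away from $t = 0$. Now choose $\delta = \varepsilon/(3M)$; then any Borel set $C$ with $|C| < \delta$ gives middle-piece contribution less than $\varepsilon/3$. Combining the three estimates, $|S_\lambda 1_C(x)| \le \eex\int_0^\infty e^{-\lambda t}1_C(X_t)\,dt < \varepsilon$, which is the claim. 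I do not expect any genuine obstacle here — the main point to get right is the bookkeeping of the three time-regimes and the observation that the non-integrable singularity of $p(t,x,y)$ at $t=0$ is harmless because $e^{-\lambda t}1_C(X_t)$ is itself bounded by $1$, so the short-time contribution is controlled trivially rather than through the heat kernel bound.
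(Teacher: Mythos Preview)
Your argument is correct and follows essentially the same strategy as the paper: split off a short-time interval where the trivial bound $e^{-\lambda t}1_C(X_t)\le 1$ suffices, and on the remainder use Proposition~\ref{p31} to get a contribution proportional to $|C|$. The paper is slightly leaner in that it uses only two pieces, $[0,\varepsilon/2]$ and $[\varepsilon/2,\infty)$, since the factor $e^{-\lambda t}$ already makes $\int_{\varepsilon/2}^\infty e^{-\lambda t}(t^{-d/\alpha}\wedge 1)\,dt$ finite; your large-time cutoff at $T$ is therefore unnecessary, but harmless.
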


\begin{proof}
Fix $\varepsilon > 0$.  We observe that
\begin{align*}
S_\lambda 1_C(x) &= \eex \int_0^\infty e^{-\lambda t}1_C(X_t)dt \\
&\leq \eex \int_0^{\varepsilon / 2} e^{-\lambda t}1_C(X_t)dt + \eex \int_{\varepsilon/2}^\infty e^{-\lambda t}1_C(X_t)dt \\
&\leq {\varepsilon}/{2} + \int_{\varepsilon/2}^\infty e^{-\lambda t}\int_Cp(t,x,y)\thinspace dy \thinspace dt \\
&\leq {\varepsilon}/{2} + c_1\int_{\varepsilon/2}^\infty e^{-\lambda t}\int_C(t^{-d/\alpha} \wedge 1)\thinspace dy \thinspace dt, 
\end{align*} 
by an application of Proposition \ref{p31}.  Therefore, 
$$|S_\lambda 1_C(x)| \leq  {\varepsilon}/{2} +c_2(\varepsilon/2)^{1 - d/\alpha}|C|.$$
 Now choose $\delta$ so that if $|C| < \delta$, then $c_2(\varepsilon/2)^{1 - d/\alpha}|C| \leq {\varepsilon}/{2}$.
\end{proof}

Now we show that these processes will spend some time in a set which is almost the entire cube $Q(x_0,R)$.

\begin{prop} \label{p42}
There exist $c_1$ and $\varepsilon$ depending on $R$, such that if $B \subseteq Q(x_0,R)$, $x \in Q(x_0,R/2) \setminus \NN$, and $|Q(x_0,R)-B| < \varepsilon$, then  $$\eex \int_0^{\tau_{Q(x_0,R)}}1_B(X_s)ds \geq c_1.$$
\end{prop}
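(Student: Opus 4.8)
The plan is to bound the occupation time of $B$ from below by the occupation time of the whole cube $Q(x_0,R)$ minus the occupation time of the small leftover set $C := Q(x_0,R) \setminus B$, using that $1_B = 1_{Q(x_0,R)} - 1_C$ on $Q(x_0,R)$ and that the process stays in $Q(x_0,R)$ up to time $\tau := \tau_{Q(x_0,R)}$. Thus
$$\eex \int_0^{\tau} 1_B(X_s)\,ds = \eex \int_0^{\tau} 1_{Q(x_0,R)}(X_s)\,ds - \eex \int_0^{\tau} 1_C(X_s)\,ds = \eex \tau - \eex \int_0^{\tau} 1_C(X_s)\,ds.$$
So it suffices to show (i) a uniform lower bound $\eex \tau \ge c_2$ for $x \in Q(x_0,R/2)\setminus\NN$, and (ii) that the leftover term can be made smaller than $c_2/2$ by choosing $\varepsilon$ (hence $|C|$) small.

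For step (i), I would fit a ball inside the cube: $B(x_0, R/2) \subseteq Q(x_0,R)$, and note $Q(x_0,R/2) \subseteq B(x_0, R\sqrt{d}/2)$ — actually more carefully, any $x \in Q(x_0,R/2)$ lies in $B(x_0, \varepsilon' \cdot (R/2))$ for a suitable $\varepsilon' < 3/4$ (taking the ball $B(x_0,R/2)$ as the reference ball and noting the cube $Q(x_0,R/2)$ sits well inside it), so Proposition \ref{p32} gives $\eex \tau_{B(x_0,R/2)} \ge c_3$, and since $\tau \ge \tau_{B(x_0,R/2)}$ this yields $\eex \tau \ge c_3$ uniformly. (I may need to shrink to $Q(x_0, R/4)$ or similar to get the containment constants to work, but this is routine.)

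For step (ii), the natural route is to dominate the finite-horizon occupation integral by an exponentially weighted one of the type handled in Proposition \ref{p41b}. Since $1_C \ge 0$ and $e^{-\lambda t} \ge e^{-\lambda T}$ on bounded time intervals, one would like to say $\eex \int_0^\tau 1_C(X_s)\,ds \le e^{\lambda T} S_\lambda 1_C(x)$, but $\tau$ is not bounded. The clean fix: pick $T$ large enough that $\ppx(\tau > T)$ is small uniformly (by Proposition \ref{p33b}, $\eex \tau \le c_1 R^{2\alpha/d}$, so $\ppx(\tau > T) \le c_1 R^{2\alpha/d}/T$), split the integral at $T$, bound the tail $\int_T^\tau 1_C \le \tau \mathbf 1_{(\tau > T)}$ crudely — actually better to bound $\eex[\tau; \tau > T]$ using Proposition \ref{p33b} together with the Markov property as in that proof to get it $\le c_2/4$ for $T$ large — and on $[0,T]$ use $\int_0^T 1_C(X_s)\,ds \le e^{\lambda T}\int_0^\infty e^{-\lambda s}1_C(X_s)\,ds$, so $\eex\int_0^T 1_C(X_s)ds \le e^{\lambda T} S_\lambda 1_C(x) < c_2/4$ once $|C| < \delta$ by Proposition \ref{p41b}. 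Choosing $\varepsilon = \delta$ then finishes the argument with $c_1 = c_2/2$.

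The main obstacle is step (ii): Proposition \ref{p41b} is pointwise in $x$ and gives no control uniform in $x$, while here $x$ ranges over $Q(x_0,R/2)\setminus\NN$. One must check that the $\delta$ from Proposition \ref{p41b} can be taken uniform over this compact set of starting points — which should follow because the only $x$-dependence in that proof's final bound $\varepsilon/2 + c_2(\varepsilon/2)^{1-d/\alpha}|C|$ is through the constant $c_1$ from Proposition \ref{p31}, and that constant is uniform in $x,y$. Assembling the pieces so that the time horizon $T$, the tail estimate, and the resolvent bound all fit together with explicit dependence only on $R$ (and the assumption constants) is the part requiring care; everything else is routine.
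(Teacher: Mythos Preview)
Your proposal is correct and follows essentially the same route as the paper: decompose $1_B = 1_{Q(x_0,R)} - 1_C$, use Proposition~\ref{p32} for the lower bound $\eex\tau \ge c_2$, truncate at a large time $t_0$, control the tail via Proposition~\ref{p33b}, and invoke Proposition~\ref{p41b} on the bounded-time piece. The only cosmetic difference is that the paper bounds the tail through the second moment, writing $\eex(\tau - \tau\wedge t_0) \le \eex[\tau;\tau\ge t_0] \le \eex\tau^2/t_0$ (with $\eex\tau^2$ finite by the geometric decay implicit in Proposition~\ref{p33b}), whereas you suggest invoking that geometric decay more directly; both work. Your observation that the $\delta$ in Proposition~\ref{p41b} is uniform in $x$ (since the constant from Proposition~\ref{p31} is) is correct and is indeed needed, though the paper leaves it implicit.
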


\begin{proof}
Let us denote $\tau_{Q(x_0,R)}$ by $\tau$.  By Proposition \ref{p32}, there exists $c_2$ such that $\eex \tau \geq c_2$, and by Proposition \ref{p33b}, we have that $\sup _x \ee^x \tau \leq c_3$, so that $\eex \tau^2 \leq c_4.$  (Both $c_2$ and $c_4$ will depend on $R$.)

Since 
$$ \eex(\tau - (\tau \wedge t_0)) \leq \eex (\tau ; \tau \geq t_0) \leq \eex \tau^2 / t_0,  $$    
we are able to choose $t_0$ large enough to ensure that $\eex(\tau - (\tau \wedge t_0))  \leq c_2 / 4 $.  Therefore, 
\begin{align*}
\eex \int_0^\tau &1_{(Q(x_0,R) -B)}(X_s)ds \\
& \leq c_2 /4 + e^{t_0}\eex \int_0^{t_0} e^{-s} 1_{(Q(x_0,R) -B)}(X_s)ds \\
& \leq c_2 /4 + e^{t_0}\eex \int_0^\infty e^{-s} 1_{(Q(x_0,R) -B)}(X_s)ds \\
& \leq c_2 /4 + e^{t_0}\eex \int_0^\infty e^{-s\lambda} 1_{(Q(x_0,R) -B)}(X_s)ds. 
\end{align*}
Now by Proposition \ref{p41b}, we can choose $\varepsilon$ small enough so that $$e^{t_0}\eex \int_0^\infty e^{-s\lambda} 1_{(Q(x_0,R) -B)}(X_s)ds < c_2 /4,$$ so this proposition will hold with $c_1 = c_2 / 2$.
\end{proof}

\begin{lemma} \label{l43}
Suppose $r > 1$ and let $W$ be a cube in $Q(x_0,R)$.  Let $W^*$ be the cube with the same center as $W$ but side length half as long.  Let $V$ be a subset of $W$ with the property that there exists $\delta$ such that
$$ \eey \int_0^{\tau_W} 1_V(X_s)ds \geq \delta \eey \tau_W, \qquad y \in W^*\setminus \NN.$$
Then there exists $\xi(\delta)$ depending on $\delta$ and r such that
$$ \eey \int_0^{\tau_{Q(x_0,rR)}} 1_V(X_s)ds \geq \xi(\delta) \eey \int_0^{\tau_{Q(x_0,rR)}} 1_W(X_s)ds, \qquad y \in Q(x_0,R)\setminus \NN. $$
\end{lemma}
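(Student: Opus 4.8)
The plan is to decompose the time $X$ spends in $W$ into successive excursions (``dips'') into $W$, and to apply the strong Markov property together with the hypothesis on those dips that reach $W^*$. Write $D:=Q(x_0,rR)$ and $\tau:=\tau_D$; note $W\subseteq Q(x_0,R)\subseteq D$ and $\tau_W\le\tau$. Let $S_1=T_W$ and recursively $U_j=S_j+\tau_W\circ\theta_{S_j}$, $S_{j+1}=U_j+T_W\circ\theta_{U_j}$, so that $\{\,s<\tau:X_s\in W\,\}=\bigcup_{j:\,S_j<\tau}[S_j,U_j)$, with each member of this union contained in $[0,\tau)$ since a dip stays inside $W\subseteq D$. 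Call the $j$-th dip \emph{good} on the event $G_j:=\{\,T_{W^*}\circ\theta_{S_j}<\tau_W\circ\theta_{S_j}\,\}$, and on $G_j$ put $\eta_j:=S_j+T_{W^*}\circ\theta_{S_j}$; then $X_{\eta_j}\in W^*\setminus\NN$ (the process avoids $\NN$ at all times, a.s.) and $U_j-\eta_j=\tau_W\circ\theta_{\eta_j}$.

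First I would harvest $V$-occupation from good dips. By the strong Markov property at $\eta_j$ and the hypothesis applied at $X_{\eta_j}$,
\[
\eey\!\left[\int_{\eta_j}^{U_j}1_V(X_s)\,ds\,;\,S_j<\tau,\,G_j\right]=\eey\!\left[1_{\{S_j<\tau,\,G_j\}}\,\ee^{X_{\eta_j}}\!\!\int_0^{\tau_W}\!\!1_V(X_s)\,ds\right]\ \ge\ \delta\,\eey\!\left[(U_j-\eta_j)\,;\,S_j<\tau,\,G_j\right].
\]
The time-sets $[\eta_j,U_j)$, over good dips with $S_j<\tau$, are pairwise disjoint subsets of $\{\,s<\tau:X_s\in W\,\}$, so summing over $j$ gives $\eey\int_0^\tau 1_V(X_s)\,ds\ge\delta\,\Phi(y)$, where $\Phi(y):=\eey\sum_{j:\,S_j<\tau}(U_j-\eta_j)\,1_{G_j}$. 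Hence it suffices to show
\[
\eey\!\int_0^\tau 1_W(X_s)\,ds\ \le\ C\,\Phi(y),\qquad y\in Q(x_0,R)\setminus\NN,
\]
with $C$ depending only on $r$ (and on $R$ and the constants of Assumption \ref{a21b}); the lemma then holds with $\xi(\delta)=\delta/C$.

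Conditioning each dip at its start $S_j$, with $z=X_{S_j}\in W\setminus\NN$, one has $\eey[U_j-S_j\mid\FF_{S_j}]=\ee^z\tau_W$ and $\eey[(U_j-\eta_j)1_{G_j}\mid\FF_{S_j}]=\ee^z[\,1_{\{T_{W^*}<\tau_W\}}\,\ee^{X_{T_{W^*}}}\tau_W\,]$; summing over dips (and using the tower property, since $\{S_j<\tau\}\in\FF_{S_j}$) reduces the displayed bound to the pointwise estimate $\ee^z\tau_W\le C\,\ee^z[\,1_{\{T_{W^*}<\tau_W\}}\,\ee^{X_{T_{W^*}}}\tau_W\,]$ for every $z\in W\setminus\NN$. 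Since $\ee^w\tau_W\ge\ee^w\tau_{B(w,\,s/4)}\ge c_0>0$ for $w\in W^*\setminus\NN$ (Proposition \ref{p32}, $s$ the side length of $W$, as $B(w,s/4)\subseteq W$), it is then enough to prove $\ee^z\tau_W\le C'\,\pp^z(T_{W^*}<\tau_W)$ for $z\in W\setminus\NN$ — that is, that the expected time $X$ spends in $W$ after first hitting $W^*$ is a definite fraction of $\ee^z\tau_W$. For $z$ well inside $W$ I would obtain $\pp^z(T_{W^*}<\tau_W)\ge c_1>0$ by chaining the interior lower bound Theorem \ref{t25}\,c) along a bounded chain of balls contained in $W$ joining $z$ to the centre of $W$, while $\ee^z\tau_W\le c_2$ by Proposition \ref{p33b}; for $z$ near $\partial W$ one compares the two (both small there) using the same estimates on balls of radius $\asymp\mathrm{dist}(z,\partial W)$.

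The main obstacle is exactly this last comparison $\ee^z\tau_W\lesssim\pp^z(T_{W^*}<\tau_W)$, uniform over $z\in W\setminus\NN$: a Harnack-type assertion that from anywhere in $W$ the process reaches the concentric half-cube $W^*$ before leaving $W$ with probability not much smaller than its expected survival time in $W$, which must be squeezed out of Theorem \ref{t25}\,c) together with Propositions \ref{p32}--\ref{p34}. Everything else is bookkeeping with the strong Markov property and a summation; the enlargement factor $r>1$ is used because passing to $D=Q(x_0,rR)$ keeps every point of $Q(x_0,R)$ at a definite distance from $\partial D$, which lets the conclusion be stated for all $y\in Q(x_0,R)$.
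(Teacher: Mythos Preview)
Your excursion decomposition and use of the strong Markov property are right in spirit, but ending each dip at $\tau_W$ is precisely what creates the difficulty you yourself flag as the ``main obstacle'': the uniform comparison $\ee^z\tau_W \lesssim \pp^z(T_{W^*}<\tau_W)$ for $z$ arbitrarily close to $\partial W$. Your sketch for this step (chain interior balls for $z$ well inside $W$; for $z$ near $\partial W$ ``compare the two using the same estimates on balls of radius $\asymp\mathrm{dist}(z,\partial W)$'') is not a proof, and the tools you cite do not deliver it: the lower bound of Theorem~\ref{t25}\,c) carries constants depending on the radius, and neither Proposition~\ref{p32} nor Proposition~\ref{p34} says anything when the starting point sits on or near the boundary of the domain being exited. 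Since the entry point $X_{S_j}$ can land anywhere in $W$, this boundary case cannot be avoided in your setup, and the proposal leaves it open.

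The paper removes this obstacle by a single device: it enlarges $W$ to a concentric cube $S$ of side length $(r\wedge 2^{1/d})$ times that of $W$, still contained in $Q(x_0,rR)$, and runs the excursions from entry into $W$ until exit from $S$ (not from $W$). Now every $w\in W$ lies at a fixed relative depth inside $S$, so Proposition~\ref{p34} gives $\pp^w(T_{W^*}<\tau_S)\ge c_1$ \emph{uniformly} over $w\in W\setminus\NN$, with $c_1$ depending only on the ratio $|W|/|S|$ and hence only on $r$. The reduced inequality becomes
\[
\ee^w\!\int_0^{\tau_S}\!1_V(X_s)\,ds \ \ge\ c_1\inf_{z\in W^*\setminus\NN}\ee^z\!\int_0^{\tau_W}\!1_V(X_s)\,ds \ \ge\ c_1\,\delta\inf_{z\in W^*\setminus\NN}\ee^z\tau_W,
\]
and one finishes by bounding $\inf_{z\in W^*}\ee^z\tau_W$ below against $\sup_{v\in S}\ee^v\tau_S\ge \ee^w\int_0^{\tau_S}1_W(X_s)\,ds$ via Propositions~\ref{p32} and~\ref{p33b}. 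In short: keep your decomposition, but let each dip end at $\tau_S$ rather than $\tau_W$; that one change converts your unresolved boundary Harnack comparison into a direct application of Proposition~\ref{p34}.
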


\begin{proof}
Let $S$ be the cube in $Q(x_0,rR)$ with the same center as $W$ but side length $r \wedge 2^{1/d}$ as long.  Let $T_1 = \inf \{t: X_t \in W \}$, $U_1 = \inf \{t > T_1 : X_t \notin S \}$, $T_{i+1} = \inf \{t > U_i: X_t \in W \}$, and $U_{i+1} = \inf \{t > T_{i+1} : X_t \notin S \}$.  Then
\begin{align*}
 \eey \int_0^{\tau_{Q(x_0,rR)}}1_W(X_s)ds  &= \sum \eey \Bigl[ \int_{T_i}^{U_i}1_W(X_s)ds ; T_i < \tau_{Q(x_0,rR)}    \Bigr], \\ 
&=  \sum \eey \Bigl[\ee^{X(T_i)}  \int_{0}^{\tau_S}1_W(X_s)ds ; T_i < \tau_{Q(x_0,rR)}    \Bigr],
\end{align*}
and similarly this equation also holds if we replace $W$ by $V$.  Thus we need to show that there exists a $\xi(\delta)$ such that 
$$ \ee^w \int_0^{\tau_S}1_V(X_s)ds \geq \xi(\delta) \ee^w \int_0^{\tau_S} 1_W(X_s)ds, \qquad w \in W \setminus \NN. $$

We observe that ${|W|}/|S| =  ({1}/{r^d}) \vee ({1}/{2})$, a quantity which does not depend on the size of $W$, so by Proposition \ref{p34} there exists $c_1$ only depending on $r$ such that
$$ \pp^w(T_{W^*} < \tau_S) \geq c_1, \qquad w \in W \setminus \NN. $$

So if $w \in W \setminus \NN$, the strong Markov property implies that
\begin{align*}
\ee^w \int_0^{\tau_S} 1_V(X_s)ds &\geq \ee^w \Bigl[  \int_0^{\tau_S} 1_V(X_s)ds ; T_{W^*} < \tau_S \Bigr] \\
&= \ee^w \Bigl[ \ee^{X(T_{W^*})}  \int_0^{\tau_S} 1_V(X_s)ds ; T_{W^*} < \tau_S \Bigr] \\
&\geq c_1 \inf_{z \in W^*\setminus \NN} \eez \int_0^{\tau_S} 1_V(X_s)ds \\
&\geq c_1 \inf_{z \in W^*\setminus \NN} \eez \int_0^{\tau_W} 1_V(X_s)ds. 
\end{align*}

By hypothesis, if $z \in W^*\setminus \NN$, 
$$ \eez \int_0 ^{\tau_W}1_V(X_s) ds \geq \delta\eez \tau_W.$$

By Proposition \ref{p32}, 
$$ \eez \tau_W \geq c_2 \sup_{v \in S \setminus \NN} \ee^v \tau_S  \geq  c_2 \ee^w  \int_0^{\tau_S} 1_W(X_s) ds.$$

Taking $\xi(\delta) = c_1c_2\delta$ completes the proof.  Note that the way which $c_1$ and $c_2$ were chosen implies that neither constant can be greater than one, so we have that $\xi(\delta) \leq \delta.$ 
\end{proof} 

\begin{theorem} \label{t44}
 For every value of $R > 0$, there exists a nondecreasing function $\varphi : (0,1) \to (0,1)$ such that if $x \in Q(x_0, R/2) \setminus \NN$ and $B \subseteq Q(x_0,R)$, then
$$ \eex \int_0^{\tau_{Q(x_0,R)}}1_B(X_s) ds \geq \varphi \displaystyle \left(\frac{|B|}{R^d}\right). $$ 
\end{theorem}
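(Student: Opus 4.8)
The plan is to reduce the general case $B \subseteq Q(x_0,R)$ to the ``almost-full-cube'' case handled by Proposition \ref{p42}, using a Calder\'on--Zygmund / Whitney-type decomposition of $B$ together with the iteration Lemma \ref{l43}. First I would observe that it suffices to prove the estimate for the rescaled problem, so without loss of generality I can work at a scale where any cube appearing is large compared to $1$ (the jump range); alternatively one keeps $R$ fixed and absorbs scale dependence into the constants, since all of Propositions \ref{p32}, \ref{p33b}, \ref{p34} and \ref{p42} are stated with $R$-dependent constants. The key point is that Lemma \ref{l43} lets us ``export'' a lower bound for the occupation time of a subset $V$ inside a small cube $W$ (relative to $\tau_W$) to a lower bound for the occupation time relative to that of $W$ inside the big cube $Q(x_0,R)$, with a quantitative loss $\xi(\delta) \le \delta$.

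The main steps, in order, are as follows. (1) Given $B \subseteq Q(x_0,R)$ with $|B|/R^d = a \in (0,1)$, perform a stopping-time (dyadic) decomposition: repeatedly subdivide $Q(x_0,R)$ into dyadic subcubes and select, at the first generation where it happens, every dyadic subcube $W$ in which the density $|B \cap W|/|W|$ exceeds a fixed threshold $\eta$ close to $1$ (say $\eta = 1-\varepsilon$, with $\varepsilon$ the constant from Proposition \ref{p42} applied at the scale of $W$). A standard covering argument shows the selected cubes $\{W_j\}$ are disjoint and satisfy $\sum_j |W_j| \ge c\, a\, R^d$ for a constant $c$ depending only on $d$; this uses only that $|B|>0$ forces such cubes to exist by the Lebesgue density theorem, and the doubling structure controls the total measure from below in terms of $a$. (2) On each selected $W_j$, Proposition \ref{p42} (rescaled to $W_j$) gives $\eey \int_0^{\tau_{W_j}} 1_{B \cap W_j}(X_s)\,ds \ge c_1 \ge \delta_0\, \eey \tau_{W_j}$ for $y \in W_j^* \setminus \NN$, where $\delta_0$ depends on $d$ and on the side length of $W_j$ via Propositions \ref{p32}--\ref{p33b}; since the $W_j$ are dyadic subcubes of $Q(x_0,R)$, there are only finitely many possible side lengths, so one can take a single $\delta_0>0$. (3) Apply Lemma \ref{l43} with $V = B \cap W_j$, $W = W_j$, and $rR$ chosen so that $Q(x_0,rR) \supseteq Q(x_0,R)$ — here one is careful that $\tau$ in the theorem is $\tau_{Q(x_0,R)}$, so one should run the argument of Lemma \ref{l43} directly with the exit cube $Q(x_0,R)$ rather than a dilate, which is permissible since the only use of the dilate there was to leave room for the sausage $S$; alternatively absorb the dilation into $R$ at the start. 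This yields
\[
\eex \int_0^{\tau_{Q(x_0,R)}} 1_{B \cap W_j}(X_s)\,ds \ \ge\ \xi(\delta_0)\, \eex \int_0^{\tau_{Q(x_0,R)}} 1_{W_j}(X_s)\,ds,\qquad x \in Q(x_0,R/2)\setminus\NN.
\]
(4) Sum over $j$ using disjointness of the $W_j$, and bound $\sum_j \eex \int_0^{\tau} 1_{W_j}(X_s)\,ds$ from below. For this last bound I would show $\eex \int_0^{\tau_{Q(x_0,R)}} 1_E(X_s)\,ds \ge c\,|E|$ for any $E \subseteq Q(x_0,R)$ and $x \in Q(x_0,R/2)\setminus\NN$: this follows by the same Theorem \ref{t25} c) argument as in Proposition \ref{p34}, integrating $p^{Q(x_0,R)}(t,x,y) \ge c_2$ over $t \in [1/4,1]$ and $y \in E$. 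Combining, $\eex \int_0^\tau 1_B(X_s)\,ds \ge \xi(\delta_0) \sum_j c\,|W_j| \ge \xi(\delta_0)\, c'\, a\, R^d /R^d$ up to constants, giving a bound of the form $\varphi(a)$ that is linear in $a$; finally replace $\varphi$ by its nondecreasing lower envelope (or simply note the bound $c'' a$ is already nondecreasing) and, if desired, truncate so that $\varphi$ maps into $(0,1)$.

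The main obstacle I anticipate is Step (1) together with the bookkeeping in Step (2)--(3): one must choose the density threshold $\eta$ and the selected cubes so that (a) each selected $W$ is genuinely ``almost full'' at \emph{its own scale}, so Proposition \ref{p42} applies with a scale-appropriate $\varepsilon$, yet (b) the total measure $\sum |W_j|$ is bounded below by a fixed multiple of $|B|$ — these pull in opposite directions, since demanding very high density forces one to descend to very small cubes where the ``room'' needed for Lemma \ref{l43}'s sausage $S$ (which required $r>1$, i.e. cubes of side $>1$) may be unavailable. The clean way around this is to rescale the whole problem at the outset by a factor $\rho$ large enough that $Q(x_0,R)$ becomes $Q(x_0, \rho R)$ with $\rho R$ large, noting that rescaling space by $\rho$ changes the process only through the constants (the estimates of Theorem \ref{t22} and Propositions \ref{p31}--\ref{p34} are all stated with constants allowed to depend on the relevant radius), so that every dyadic subcube used in the decomposition still has side length exceeding $1$; then $\delta_0$, $\xi(\delta_0)$, $\varepsilon$, and the covering constant all become honest constants depending only on $R$ and $d$, and the summation goes through. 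The remaining steps are routine given Lemma \ref{l43} and Proposition \ref{p42}.
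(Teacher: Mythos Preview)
Your plan differs substantially from the paper's, and there is a genuine gap. The paper does \emph{not} run a one-shot Calder\'on--Zygmund decomposition at a threshold $\eta$ close to $1$; it argues by contradiction in the style of Krylov--Safonov. One defines $\varphi(\varepsilon)$ as the infimum of the occupation-time functional over all admissible configurations, sets $q_0 = \inf\{\varepsilon : \varphi(\varepsilon)>0\}$, assumes $q_0>0$, picks $q>q_0$ with $(q+q^2)/2<q_0$, and then performs the cube decomposition at threshold $q$ (not at threshold $\approx 1$). The point of the growth lemma is that the union $D$ of the tripled selected cubes satisfies $|D\cap Q(x_0,R)| \ge |B|/q > q_0 R^d$, so the \emph{already established} positivity $\varphi(q)>0$ applies to $\widetilde D$; one then uses Lemma~\ref{l43} on each selected cube (where the density exceeds $q$) and sums to contradict the choice of $B$. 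The decomposition threshold is bootstrapped from $\varphi$ itself, never from Proposition~\ref{p42} at small scales.

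Your Step~(2) breaks precisely here. The claim that ``there are only finitely many possible side lengths'' for the stopping cubes $W_j$ is false: for a set $B$ of small density, or one that is spread out Cantor-like, the Calder\'on--Zygmund stopping time at threshold $\eta\approx 1$ must descend to arbitrarily small dyadic scales, and there is no bound on the depth in terms of $|B|/R^d$ alone. Consequently you cannot take a single $\delta_0$ in Step~(2), because the constants $c_1,\varepsilon$ in Proposition~\ref{p42} and the constants in Propositions~\ref{p32}--\ref{p33b} genuinely depend on the side length of the cube (the process has jumps truncated at $1$ and is \emph{not} scale-invariant). Your proposed fix---rescale space by $\rho$ so that every cube used has side exceeding $1$---cannot work for the same reason: no finite $\rho$ bounds the depth of the decomposition uniformly over all $B$ with a given density, and rescaling changes the jump-truncation radius so that Assumption~\ref{a21b} no longer holds with the original constants. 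A secondary issue is Step~(4): the lower bound $p^{Q(x_0,R)}(t,x,y)\ge c_2$ from Theorem~\ref{t25}(c) is only available for $x,y$ in a ball compactly contained in $Q(x_0,R)$, so $\eex\int_0^\tau 1_E\,ds \ge c|E|$ fails for $E$ near $\partial Q(x_0,R)$; the paper handles this by first intersecting with $Q(x_0,R-\beta)$. The contradiction/growth-lemma structure is what lets one avoid ever invoking Proposition~\ref{p42} on cubes of uncontrolled size.
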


\begin{proof}
Fix $R > 0$, and let
\begin{align*}
\varphi(\varepsilon) = \inf\Bigl\{ \eey \int_0^{\tau_{Q(z_0,R)}}&1_B(X_s)ds \colon z_0 \in \rr^d,B \subseteq Q(z_0,R), \\
 &|B| \geq \varepsilon|Q(z_0,R)|, y \in Q(z_0,R/2)  \setminus \NN \Bigr\}.
\end{align*}
By Proposition \ref{p42} , we obtain that $\varphi(\varepsilon) > 0$ for $\varepsilon$ sufficiently close to 1.  Our goal, then, is to show that $\varphi(\varepsilon) > 0$ for all positive $\varepsilon$.

Let $q_0$ be the infimum of the $\varepsilon$ for which $\varphi(\varepsilon) > 0$.  We will argue by contradiction, and will suppose that $q_0 > 0$.  Since $q_0 < 1$, there exists a $q > q_0$ such that $(q + q^2)/2 <q_0$.  Set $\gamma = (q - q^2)/2$.  Let $\beta$ be a number of the form $2^{-n}$ with
$$ (\gamma \wedge q \wedge(R-q))/32d \leq \beta < (\gamma \wedge q \wedge(R-q))/16d. $$

Since $\xi(\delta) \leq \delta$ and $\varphi$ is an increasing function, there exist $z_0 \in \rr^d$,  $B_1 \subseteq Q(z_0, R)$, and $x \in Q(z_0, R/2) \setminus \NN$ such that $q > |B_1|/|Q(z_0,R)| > q - \gamma / 2$ and 
$$ \eex  \int_0^{\tau_{Q(z_0,R)}}1_{B_1}(X_s)ds < \xi(\varphi(q))\varphi(q),$$
where $\xi$ is defined in Lemma \ref{l43}.  We will prove the result in the special case where $z_0 = x_0$.  The general case can be shown similarly.
Therefore, 
$$\eex  \int_0^{\tau_{Q(x_0,R)}}1_{B_1}(X_s)ds < \xi(\varphi(q))\varphi(q).$$

Let $B = B_1 \cap Q(x_0,R-\beta)$.  Then
$$\eex  \int_0^{\tau_{Q(x_0,R)}}1_{B}(X_s)ds < \xi(\varphi(q))\varphi(q),$$
and $q > |B| > q - \gamma$, since we chose $\beta$ small enough to guarantee that $$|Q(x_0,R) - Q(x_0,R-\beta)| \leq \gamma/2.$$
As in the Harnack inequality proof given by Krylov and Safonov \cite{ks}, we construct $D$ consisting of the union of cubes $\widehat R_i$, such that $$|D \cap Q(x_0,R)| \geq ({|B|}/{q}) > R^d({q - \gamma})/{q} = R^d({q + 1})/{2},$$ and such that $ |B \cap R_i| > q|R_i|$ for all $i$.  We also have that the $R_i$ have pairwise disjoint interiors, where $R_i$ is the cube with the same center as $\widehat R_i$ and one-third the side length.  (For a proof of this, see Chapter 5, Section 7 of \cite{bass}.)  Let $\widetilde D =D \cap Q(x_0,R)$.  Then we see that
$$|\widetilde D|  \geq R^d({q + 1})/{2} > R^dq > R^dq_0,$$
and therefore
$$ \eex \int_0^{\tau_{Q(x_0,R)}}1_{\widetilde D}(X_s)ds > \varphi(q).$$

Let $V_i = \widehat R_i \cap Q(x_0,R-\beta)$.  We want to show for each $i$,
\begin{equation} \label{goalb}
\eex  \int_0^{\tau_{Q(x_0,R)}}1_{B \cap R_i}(X_s)ds \geq \xi(\varphi(q)) \eex  \int_0^{\tau_{Q(x_0,R)}}1_{V_i}(X_s)ds.
\end{equation}

Once we have \eqref{goalb}, we sum and we have 
\begin{align*} 
\eex  \int_0^{\tau_{Q(x_0,R)}}1_{B}(X_s)ds &\geq \sum_i \int_0^{\tau_{Q(x_0,R)}}1_{B \cap R_i}(X_s)ds \\
&\geq \xi(\varphi(q))  \sum_i \eex \int_0^{\tau_{Q(x_0,R)}}1_{V_i}(X_s)ds \\
&\geq \xi(\varphi(q))  \eex \int_0^{\tau_{Q(x_0,R)}}1_{\widetilde D}(X_s)ds \\
&\geq \xi(\varphi(q))\varphi(q),
\end{align*}
which is our desired contradiction.

We now prove \eqref{goalb}.  Fix $i$.  By our definition of $\beta$, if $V_i$ is not empty, then $V_i$ is contained in a cube $W_i$ which is itself a subset of $Q(x_0,R-\beta)$, such that $|W_i| \leq 3^d|R_i|$.  Let $ R_i^*$ be the cube with the same center as $R_i$ but side length half as long.  By the definition of $\varphi$, 
$$ \eey \int_0^{\tau_{R_i}} 1_{B \cap R_i}(X_s)ds \geq \varphi(q)\eey \tau_{R_i} $$
if $ y \in R_i^* \setminus \NN$.  We can now deduce \eqref{goalb} from Lemma \ref{l43}. 
\end{proof}

\section{Support Theorem}\label{supp}
In this section, we will prove a support theorem for $X$.  This proof is somewhat similar to the one given by Bass and Chen \cite{bc2}.  However,  the processes considered in \cite{bc2} can only jump in finitely many different directions, so our proof will require some different techniques.  We begin by proving some lemmas.

\begin{lemma}\label{l21}
Let $x_0, x_1 \in \rr^d \setminus \NN$ with $z = x_1 - x_0$ such that $|z|<1/2$ and fix $0<\gamma <1/4$ and $t_0 \geq 0$.  There exists a stopping time $T$ and a positive constant $c_1$ depending only on $\gamma$, $|z|$, and $t_0$,  such that 
\begin{equation*} 
\pp^{x_0}(T \leq t_0, 
 \displaystyle \sup_{s<T} |X_s - x_0| < \gamma ,
    \sup_{T \leq s \leq t_0} |X_s - x_1| < \gamma) \geq c_1.
\end{equation*}
\end{lemma}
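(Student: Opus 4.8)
The plan is to construct $T$ as the first time the process makes one ``big'' jump of size approximately $|z|$ in the direction of $z$, landing close to $x_1$, while forcing the process to stay near $x_0$ before that jump and near $x_1$ afterwards. Concretely, I would fix a small radius, say $\gamma/4$, and consider the event that: (i) up to some deterministic time $u_1$, the process stays within $\gamma$ of $x_0$; (ii) at some time $T \in (u_1, u_2]$ (with $u_2 \le t_0$, after shrinking if necessary so that $t_0 > 0$; the degenerate case $t_0 = 0$ forces $x_0 = x_1$ and is trivial), the process jumps from a point within $\gamma/4$ of $x_0$ to a point within $\gamma/4$ of $x_1$; (iii) from time $T$ to time $t_0$, the process stays within $\gamma$ of $x_1$. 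Because Assumption \ref{a21b}(c) gives $J(w,w') \asymp |w'-w|^{-d-\alpha}$ whenever $|w'-w| < 1$, and here $|x_1 - x_0| = |z| < 1/2$, the jump kernel is bounded below by a positive constant on the relevant region $\{(w,w') : |w - x_0| < \gamma/4,\ |w' - x_1| < \gamma/4\}$, provided $\gamma$ is small enough that all such jumps have size $< 1$ (which holds since $|z| < 1/2$ and $\gamma < 1/4$).

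The key steps, in order, are as follows. First, I would use Proposition \ref{p32} (or rather the heat-kernel lower bound Theorem \ref{t22}(c) underlying it) to show that starting from $x_0$, with probability at least some $c_2 > 0$ the process is still within $B(x_0, \gamma/4)$ at a fixed small time $u_1$ and has not exited $B(x_0,\gamma)$ before then; this gives control of phase (i). Second, conditionally on being at a point $w$ with $|w - x_0| < \gamma/4$, I would lower-bound the probability that within a further short time interval the process makes exactly one jump, and that this jump is a ``good'' jump landing in $B(x_1, \gamma/4)$, with no exit from $B(x_0,\gamma)$ before the jump. The rate of good jumps is at least $\int_{B(x_1,\gamma/4)} J(w, w')\,dw' \ge c_3 > 0$ by Assumption \ref{a21b}(c), so the probability of at least one such jump in a time interval of fixed length is bounded below; one then uses a Lévy-system / compensator argument (the jump measure of $X$ has density $J(X_{s-}, y)$) to isolate the first good jump and set $T$ equal to its time, simultaneously arguing that the small intervening non-jump motion does not leave $B(x_0,\gamma)$. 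Third, once $X_T \in B(x_1,\gamma/4)$, I would again apply Theorem \ref{t22}(c) / Proposition \ref{p32}-type estimates with the strong Markov property at $T$ to conclude that from $T$ to $t_0$ the process stays within $B(x_1,\gamma)$ with probability bounded below, uniformly over the starting point in $B(x_1,\gamma/4)$ and over the remaining time $t_0 - T \in [0,t_0]$. Multiplying the three lower bounds (using the strong Markov property to chain them) yields $c_1 > 0$ depending only on $\gamma$, $|z|$, and $t_0$.

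The main obstacle I expect is step two: rigorously producing the stopping time $T$ corresponding to a single controlled big jump while simultaneously ruling out any exit of $B(x_0,\gamma)$ by small jumps beforehand. This requires a careful use of the Lévy system for $X$ — writing, for a suitable predictable set, $\ee^w \sum_{s \le t} 1_{\{X_{s-} \in B(x_0,\gamma),\, (X_{s-}, X_s) \in G\}} = \ee^w \int_0^t \int_G J(X_{s-}, y)\,dy\,ds$ where $G$ is the ``good jump'' target region — together with an estimate that, on a short enough time interval, the total mass of jumps of size exceeding $\gamma/2$ is small (so with positive probability there are no such ``bad'' large jumps before $T$) and the accumulated small-jump and drift displacement stays below $\gamma/2$. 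Combining these with the heat kernel lower bounds for the in-ball motion requires some bookkeeping, but no single estimate is deep; the heat kernel bounds from \cite{bbck} (Theorem \ref{t22}) and the exit-time estimates of Propositions \ref{p32} and \ref{p33b} supply everything needed. A cleaner alternative I would also consider is to avoid explicitly splitting off non-jump motion: define $T$ directly via the Lévy system as the first jump with $X_{s-} \in B(x_0,\gamma/4)$ and $X_s \in B(x_1,\gamma/4)$, and separately estimate $\pp^{x_0}(\sup_{s < T}|X_s - x_0| < \gamma,\ T \le t_0)$ by a decomposition over small time steps, at each of which both the probability of staying in a small ball and the probability of not yet having made a good jump are controlled.
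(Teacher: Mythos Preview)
Your overall strategy --- isolate a single ``big'' jump of size roughly $|z|$ landing near $x_1$, and control the process near $x_0$ before and near $x_1$ after --- is exactly the paper's. The technical route differs: rather than working with the L\'evy system of $X$ directly, the paper invokes Meyer's construction to decompose $X$ into a truncated process $\overline{X}$ with no jumps of size $\ge \beta = |z|/2$, onto which large jumps are grafted at times governed by independent unit-rate exponentials $S_1, S_2$. The event $E = \{\overline{X}\text{ stays in }B(x_0,\delta)\text{ on }[0,t_0]\}$ (bounded below via Theorem~\ref{t22}(c)) is then \emph{literally independent} of the event $F = \{S_1 \le c_4 t_0,\ S_2 \ge c_5 t_0\}$, which forces exactly one added jump before $t_0$; the jump displacement is controlled by a further explicit density $q(x,dw)$. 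The stopping time is simply the first added-jump time $U_1$, and the three probabilities multiply with no conditioning or compensator arithmetic. Your L\'evy-system approach is valid and is essentially the same decomposition unpacked by hand --- Meyer's construction is precisely a pathwise realization of the small/large-jump splitting you describe --- but it trades the auxiliary process for the bookkeeping you correctly flag in step two (simultaneously ruling out a bad exit and producing one good jump), and it needs the explicit strong-Markov restart at $T$ for the post-jump phase. The Meyer route is cleaner here because the independence is built in; your route avoids introducing $\overline{X}$ and would transfer more readily to settings where that construction is awkward.
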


\begin{proof}
Let $\beta = {|z|}/{2}$ , and $\delta = ({\gamma}/{3}) \wedge  ({|z|}/{6})$.  We define $$J^{(\beta)}(x,y) = J(x,y)\mathbbm 1_{(|x-y|< \beta)},$$ and let $\EE^{(\beta)}$ be defined by \eqref{ee} with $J^{(\beta)}$ in place of $J$. 
Using Meyer's construction, which we reference later in this proof, we obtain the existence of a strong Markov process $\overline X$ associated to $(\EE^{(\beta)}, \FF)$ defined on $\rr^d \setminus \NN(\beta)$.
Therefore, $\overline X$ has no jumps having size larger than $\beta$.  In fact, by this construction we can take $\NN(\beta) = \NN$, so $\overline{X}$ is defined up to the same zero capacity set as $X$ was.  See the proof of Proposition 3.9 in \cite{bbck} for further details. 

Let $B = B(x_0, \delta)$.  It follows from Theorem \ref{t25} c) that 
\begin{equation}
\pp^{x_o}(\overline \tau_{B} > t_0) \geq \int_{B(x_0, 3\delta/4)}\overline p^B(t_0, x_0, y)dy \geq c_2\delta^d
\end{equation}
where $\overline p(t,x,y)$ is the transition density for $\overline X_t$ and $\overline \tau_B$ is the exit time for $\overline X_t$.  We are still able to apply Theorem \ref{t25} in this case, since the only change made to our Dirichlet form is eliminating jumps of size larger than $\beta$, instead of eliminating jumps having size larger than 1.
  Let 
$ E = \displaystyle \{ \sup_{s \leq t_0} |\overline X_s - x_0| \leq \delta \}.$  It follows therefore, that $\pp^{x_0}(E) \geq c_3$, where $c_3$ depends on $\delta$ and $t_0$.

We now will use a construction of Meyer to add some large jumps to the process $\overline X_t$, in order to create a process $X_t$ which will be associated to the operator $(\EE, \FF)$.  A reference for this process is Remark 3.4 of \cite{bbck}. Let $U_1$ and $U_2$ be the times of the first two jumps we add to $\overline X$, and define 
$$D =  \{U_1 \leq t_0 < U_2, \Delta X_{U_1} \in B(z,\delta) \}.$$

Let $S_1$ and $S_2$ be independent exponential random variables of parameter 1, which are also independent of $\overline X$.    We note that for any $x \in \rr^d$, 
\begin{align}\label{e27}
c_4 =\int_{\beta \leq |w| <1 } \kappa_1|w|^{-d-\alpha}dw &\leq \int_{|x-y|\geq\beta} J(x,y)dy \notag \\
&\leq \int_{\beta \leq |w| <1 } \kappa_2|w|^{-d-\alpha}dw = c_5. 
\end{align}
We define
$$ F = \{ S_1 \in (0, c_4t_0], S_2\in [c_5t_0, \infty) \}. $$
Since $S_1$ and $S_2$ are exponential,  
\begin{equation*}
\ppx(S_1 \leq c_4t_0) = 1 - e^{-c_4t_0}
\end{equation*} and
\begin{equation*}
\ppx(S_2 \geq c_5t_0) = e^{-c_5t_0},
\end{equation*}
so by the independence of $S_1$ and $S_2$, $\ppx(F)  \geq c_6$, where $c_6$ depends on $t_0$.  Furthermore, the event $F$ was chosen to be independent of $E$, so we have
$$ \pp^{x_0}(F \cap E) =  \pp^{x_0}(F)\ \pp^{x_0}(E) \geq c_3c_6.$$

We define
$$C_r = \int_0^r\int_{|\overline X_s-y|\geq \beta} J(\overline X_s, y)dy \thinspace ds.$$
Using Meyer's construction, we will introduce an additional jump to $\overline X$ at the first time $U_1$ such that $C_{U_1}$ exceeds $S_1$, restart the process, and then introduce a second jump when $C_{U_2}$ exceeds $S_2$.  It follows from \eqref{e27} that if $F$ holds, we will add  exactly one jump to $\overline X$ before time $t_0$, so that if $G = \{U_1 \leq t_0 < U_2\}$,
$$ \pp^{x_0}( G \cap E ) \geq  \pp^{x_0}(F \cap E) \geq c_3c_6.$$

Suppose now that $G$ holds.  
The location of the jump at time $U_1$ of size larger than $\beta$ will be determined by the distribution
$$q(x,dw) = \displaystyle \frac{J(x,w)}{\int_{|x-y| \geq \beta} J(x,y)dy}dw$$
where $\beta \leq |x - z| < 1$.  Therefore, if $B = B(z, \delta)$, 
\begin{align*}
\pp^{x}(\Delta X_{U_1} \in B) &= \int_B q(X_{U_1-},X_{U_1-} + dw)  \\
&\geq \int_B \displaystyle \frac{J(X_{U_1-}, w)}{c_4}dw  \\
&\geq \int_B \frac{\kappa_2}{c_5|w|^{d+\alpha}}dw \geq \int_B\frac{\kappa_2}{c_5[(3/2)|z|]^{d+\alpha}}dw = c_7, 
\end{align*}
a constant which depends only on $\gamma$ and $|z|$.  This bound does not depend on $\overline X_{U_1}$, so we have that $ \pp^{x_0}(D \cap E) \geq c_3c_6c_7$.

We now note that on $D \cap E$, 
$$\displaystyle \sup_{s<U_1} |X_s - x_0| < \delta < \gamma,$$ 
and
$$\displaystyle \sup_{U_1 \leq s \leq t_0} |X_s - x_1| \leq \delta + \delta + \delta <  \gamma,$$ 
so $U_1$ is our desired stopping time $T$.
\end{proof}

\begin{lemma}\label{l22}
Let $t_1 > 0$, $\varepsilon> 0$, $r \in (0, (\varepsilon \wedge 1)/2)$, and $\gamma > 0$.  Let $\psi \colon [0, t_1] \to \rr^d$ be a line segment of length $r$ starting at $x_0$.  Then there exists $c_1 > 0$ that depends only on $t_1$, $\varepsilon$, and $\gamma$ such that
$$\pp^{x_0}\displaystyle \left(\sup_{s\leq t_1} |X_s - \psi(s)| < \varepsilon\textit{ and }|X_{t_1} - \psi(t_1)| < \gamma \right) \geq c_1.$$
\end{lemma}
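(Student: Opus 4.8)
The plan is to obtain the bound from a single application of Lemma \ref{l21}. Write $x_1 := \psi(t_1)$ for the endpoint of the segment opposite $x_0$, so that $|x_1 - x_0| = r < (\varepsilon \wedge 1)/2 \le 1/2$, and note that every point $\psi(s)$ lies on this segment, whence $|\psi(s) - x_0| \le r$ and $|\psi(s) - x_1| \le r$ for all $s$. Set the accuracy parameter $\gamma' := \tfrac15(\gamma \wedge \varepsilon \wedge 1)$, which lies in $(0,1/4)$ and satisfies $\gamma' < \gamma$ and $\gamma' + r < \varepsilon$ (using $r < \varepsilon/2$). Applying Lemma \ref{l21} with this $\gamma'$, with $z = x_1 - x_0$, and with $t_0 = t_1$ produces a stopping time $T$ and a constant $c_2 = c_2(\gamma', r, t_1) > 0$ such that, with probability at least $c_2$, one has $T \le t_1$, $\sup_{s < T}|X_s - x_0| < \gamma'$, and $\sup_{T \le s \le t_1}|X_s - x_1| < \gamma'$. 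On that event the triangle inequality gives, for $s < T$, $|X_s - \psi(s)| \le |X_s - x_0| + |x_0 - \psi(s)| < \gamma' + r < \varepsilon$, and for $T \le s \le t_1$, $|X_s - \psi(s)| \le |X_s - x_1| + |x_1 - \psi(s)| < \gamma' + r < \varepsilon$; moreover $|X_{t_1} - \psi(t_1)| = |X_{t_1} - x_1| < \gamma' < \gamma$. Hence the probability in the statement is at least $c_2(\gamma', r, t_1)$.

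The one subtlety is that $c_2$ depends on $r$, whereas $c_1$ is required to depend only on $t_1$, $\varepsilon$, $\gamma$; and the constant supplied by Lemma \ref{l21} degenerates as $|z| \to 0$, because the confinement radius in that lemma is forced to shrink with $|z|$. I would handle this by a two-case split. If $r < \rho := \tfrac14(\gamma \wedge \varepsilon \wedge 1)$, the process need not move: by Theorem \ref{t25} c) applied to $B(x_0,\rho)$,
$$ \pp^{x_0}\big(\tau_{B(x_0,\rho)} > t_1\big) = \int_{B(x_0,\rho)\setminus\NN} p^{B(x_0,\rho)}(t_1,x_0,y)\,dy \geq c_3 > 0, $$
with $c_3$ depending only on $\rho$ and $t_1$; and on $\{\tau_{B(x_0,\rho)} > t_1\}$ we have $|X_s - \psi(s)| \le |X_s - x_0| + r < \rho + \rho = \tfrac12(\gamma \wedge \varepsilon \wedge 1) < \varepsilon$ for every $s \le t_1$, as well as $|X_{t_1} - \psi(t_1)| < \tfrac12(\gamma \wedge \varepsilon \wedge 1) < \gamma$. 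If instead $\rho \le r < (\varepsilon \wedge 1)/2$, I invoke the first paragraph; now $r$ ranges over a compact interval bounded away from $0$, and on this range $c_2(\gamma', r, t_1)$ admits a positive lower bound depending only on $\rho$, $\gamma'$, and $t_1$ — e.g. because it depends continuously on $|z| = r$, or by a finite-covering argument that covers $[\rho,(\varepsilon\wedge1)/2]$ by reference distances $r_k$, applies Lemma \ref{l21} with target $x_0 + r_k(x_1 - x_0)/|x_1 - x_0|$ for $r$ near $r_k$, and absorbs the $O(|r - r_k|)$ discrepancy into $\gamma'$. Taking $c_1$ to be the minimum of $c_3$ and this uniform lower bound finishes the argument.

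The main obstacle is precisely this uniformity in $r$: Lemma \ref{l21} on its own yields only an $r$-dependent bound with no control as $r \to 0$, so the small-$r$ regime must be treated separately, the natural tool there being the ``stay in a small ball'' estimate coming from the heat-kernel lower bound of Theorem \ref{t25} c). Two routine bookkeeping points also arise: Lemma \ref{l21} requires its endpoints to lie in $\rr^d \setminus \NN$, which we arrange by replacing $\psi(t_1)$ by a point of $\rr^d \setminus \NN$ at distance less than, say, $\gamma'/2$ from it (possible since $\NN$ has Lebesgue measure zero) and slightly decreasing $\gamma'$ to absorb the error; and when $t_1$ is small one applies Theorem \ref{t25} c) with a lower time-endpoint $\le t_1$, so that the constants above, and hence $c_1$, are permitted to depend on $t_1$.
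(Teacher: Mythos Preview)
Your first paragraph is essentially the paper's proof: apply Lemma \ref{l21} with $x_1=\psi(t_1)$ and an accuracy parameter of the form $\gamma\wedge\varepsilon/4\wedge 1/4$ (you use $\tfrac15(\gamma\wedge\varepsilon\wedge1)$, which is immaterial), and then use the triangle inequality on $[0,T)$ and $[T,t_1]$ separately. So the core argument matches.

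Where you diverge is that you take the stated dependencies seriously: Lemma \ref{l22} claims $c_1=c_1(t_1,\varepsilon,\gamma)$, while the constant supplied by Lemma \ref{l21} depends on $|z|=r$ and indeed degenerates as $r\to 0$ (via $\delta=(\gamma/3)\wedge(|z|/6)$ and $c_5\to\infty$). The paper's proof simply ignores this and writes $c_1=c_2$; you add a two-regime argument, handling small $r$ by the ``stay in a small ball'' estimate from Theorem \ref{t25} c) and $r$ bounded away from $0$ by a uniform lower bound on the Lemma \ref{l21} constant. That refinement is correct and genuinely closes a gap in the paper's write-up, though for the intended application (Theorem \ref{support}) the segment lengths are determined by the fixed polygonal approximation of $\varphi$, so any $r$-dependence would be absorbed into the allowed dependence on $\varphi$ anyway. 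Your remark about perturbing $x_1$ off the exceptional set $\NN$ is also a point the paper glosses over. The only place your sketch is thin is the uniform lower bound for $r\in[\rho,(\varepsilon\wedge1)/2)$: rather than invoking continuity, it is cleanest to inspect the proof of Lemma \ref{l21} directly and note that for $|z|\ge\rho$ the quantities $\beta$, $\delta$, $c_4$, $c_5$, $c_6$, $c_7$ are all bounded on the right side.
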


\begin{proof}

Let $T$ be the stopping time from Lemma \ref{l21}.  Let $D$ be the event that  $|X_s - x_0| < \gamma \wedge\varepsilon/4 \wedge 1/4$ for $s \in [0,T]$ and $|X_s - \psi(t_1)| < \gamma \wedge \varepsilon/4 \wedge 1/4$ for $s \in [T, t_1]$.
 By Lemma \ref{l21}, there exists a constant $c_2 > 0$ such that $\pp^{x_0}(D) \geq c_2$.  We now note that on $D$, by definition, $|X_{t_1} - \psi(t_1)| < \gamma$.   We now show that on $D$,  $|X_s - \psi(s)| < \varepsilon$ for every $s \in [0, t_1]$.

If $s < T$, since $r < \varepsilon/2$, we have that
\begin{equation*}
|X_s - \psi(s)| \leq |X_s -x_0| + |x_0 - \psi(s)| < {\varepsilon}/{4} + {\varepsilon}/{2} < \varepsilon. 
\end{equation*}
Similarly, if $T \leq s \leq t_1$, we obtain that
\begin{equation*}
|X_s - \psi(s)| \leq |X_s -\psi(t_1)| + |\psi(t_1) - \psi(s)| < {\varepsilon}/{4} + {\varepsilon}/{2} < \varepsilon. 
\end{equation*}

Therefore, this lemma holds with $c_1 = c_2$.
\end{proof}

\begin{theorem}\label{support}
Let $\varphi : [0,t_0] \to \rr^d$ be continuous with $\varphi(0)= x_0$.  Let $\varepsilon > 0$.  There exists $c_1 > 0$ depending on $\varphi$, $\varepsilon$, and $t_0$ such that
$$\pp^{x_0}\left(\displaystyle \sup_{s\leq t_0} |X_s - \varphi(s)| < \varepsilon \right) > c_1.$$
\end{theorem}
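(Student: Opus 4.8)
The approach follows the now-standard pattern, also used by Bass and Chen in \cite{bc2}: approximate $\varphi$ by a polygonal path with short edges and then run $X$ edge by edge, controlling each edge by Lemma~\ref{l22} and splicing the pieces together with the Markov property. Since $\varphi$ is continuous on the compact interval $[0,t_0]$, it is uniformly continuous there, so setting $\eta := (\varepsilon\wedge 1)/16$ and $\gamma := \eta$ we may fix a partition $0 = s_0 < s_1 < \dots < s_n = t_0$, with $n$ depending on $\varphi$, $\varepsilon$ and $t_0$, such that $|\varphi(t)-\varphi(s_{i-1})|<\eta$ whenever $s_{i-1}\le t\le s_i$. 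The statement I would prove by induction on $i$ is that, with $\pp^{x_0}$-probability bounded below by a positive constant, $X$ stays in the $\varepsilon$-tube $\{s\mapsto B(\varphi(s),\varepsilon)\}$ up to time $s_i$ and satisfies $|X_{s_i}-\varphi(s_i)|<\gamma$; the base case holds since $X_0=x_0=\varphi(0)$.

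For the inductive step, condition on $X_{s_{i-1}}=w$ with $|w-\varphi(s_{i-1})|<\gamma$. Let $\psi_i$ be the line segment, parametrized on $[0,s_i-s_{i-1}]$, running from $w$ to the \emph{next vertex} $\varphi(s_i)$. Its length satisfies $|\varphi(s_i)-w|\le |\varphi(s_i)-\varphi(s_{i-1})|+|\varphi(s_{i-1})-w|<2\eta<((\varepsilon/2)\wedge 1)/2$, so Lemma~\ref{l22}, applied with tube radius $\varepsilon/2$, endpoint tolerance $\gamma$ and time length $s_i-s_{i-1}$, produces a constant $c_i>0$ depending only on $s_i-s_{i-1}$, $\varepsilon$ and $\gamma$ (in particular not on $w$) such that, under $\pp^{w}$ and with probability at least $c_i$, the process started afresh at $w$ satisfies $\sup_{u\le s_i-s_{i-1}}|X_u-\psi_i(u)|<\varepsilon/2$ and $|X_{s_i-s_{i-1}}-\varphi(s_i)|<\gamma$. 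On that event, for $s\in[s_{i-1},s_i]$ the triangle inequality through $\psi_i(s-s_{i-1})$ and $\varphi(s_{i-1})$ gives $|X_s-\varphi(s)|<\varepsilon/2+\eta+\eta<\varepsilon$, because $\psi_i(s-s_{i-1})$ lies on the segment from $w$ to $\varphi(s_i)$ and hence within $\max(\gamma,\eta)=\eta$ of $\varphi(s_{i-1})$, while $|\varphi(s_{i-1})-\varphi(s)|<\eta$; and the endpoint condition $|X_{s_i}-\varphi(s_i)|<\gamma$ is immediate, so the induction continues. Aiming each edge at the vertex $\varphi(s_i)$ itself --- rather than at a translate of the arc $\varphi|_{[s_{i-1},s_i]}$ --- is what prevents the endpoint errors from accumulating with $i$.

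To finish, let $A_i=\{\sup_{s_{i-1}\le s\le s_i}|X_s-\varphi(s)|<\varepsilon,\ |X_{s_i}-\varphi(s_i)|<\gamma\}$; conditioning successively via the Markov property at the deterministic times $s_1,\dots,s_{n-1}$, and using that each $c_i$ is a lower bound valid uniformly over the admissible (random) positions of $X_{s_{i-1}}$, yields $\pp^{x_0}(A_1\cap\cdots\cap A_n)\ge c_1\cdots c_n>0$; on $A_1\cap\cdots\cap A_n$ one has $\sup_{s\le t_0}|X_s-\varphi(s)|<\varepsilon$, so $c_1\cdots c_n$ serves as the constant in the statement. The main obstacle, and the only real subtlety, is the uniformity just invoked: every one of the finitely many per-edge estimates must be bounded below independently of the current position of the process, which is exactly what Lemmas~\ref{l21} and~\ref{l22} were set up to deliver. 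One degenerate situation should also be noted: if $\varphi(s_i)=w$ exactly, the edge $\psi_i$ collapses to a point and Lemma~\ref{l22} does not literally apply; then one instead bounds $\pp^{w}$ of the event that $X$ stays in $B(w,\varepsilon/2)$ throughout $[0,s_i-s_{i-1}]$ and lands in $B(\varphi(s_i),\gamma)$ from below by $\int_{B(\varphi(s_i),\gamma)\setminus\NN}p^{B(w,\varepsilon/2)}(s_i-s_{i-1},w,y)\,dy$, which is positive, and uniform in such $w$, by Theorem~\ref{t25}c.
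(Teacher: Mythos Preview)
Your proof is correct and follows essentially the same approach as the paper's: break $\varphi$ into short pieces, apply Lemma~\ref{l22} on each piece, and chain via the Markov property. The only cosmetic difference is that the paper first replaces $\varphi$ by a polygonal approximation and then works with its fixed edges, whereas you keep the original $\varphi$ and aim each segment from the current random position $X_{s_{i-1}}$ to the next target $\varphi(s_i)$; your version is in fact more explicit about the uniformity in the starting point and about the degenerate zero-length case, both of which the paper leaves implicit.
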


\begin{proof}
We may approximate $\varphi$ to within $\varepsilon/2$ by a polygonal path, so by changing $\varepsilon$ to $\varepsilon/2$, we may without loss of generality assume that $\varphi$ is polygonal.  We now choose $n$ large and subdivide the interval $[0,t_0]$ into $n$ subintervals, so that over each subinterval $[kt_0/n, (k+1)t_0/n]$  the image of $\varphi$  is a line segment whose length is smaller than $\varepsilon/4 \wedge 1/2$.  By Lemma \ref{l22}, there exists $c_2 >0$,  such that on each time interval  $[kt_0/n, (k+1)t_0/n]$,
\begin{align*} 
\pp^{x_0}\displaystyle \Big(\sup_{kt_0/n\leq s\leq (k+1)t_0/n} &|X_s - \psi(s)| < {\varepsilon}/{2}\text{ and } \\ &|X_{(k+1)t_0/n} - \psi((k+1)t_0/n)| < {\varepsilon}/{(4\sqrt{d})} \Big) \geq c_2.
\end{align*}
Now by applying the Markov property $n$ times, we obtain our support theorem.
\end{proof}

\cite{bbck} and \cite{bkk} consider the case that there exist $0 < \alpha < \beta < 2$ and positive constants $\kappa_1$ and $\kappa_2$, such that if $|x-y| < 1$, then
$\kappa_1|y-x|^{-d-\alpha} \leq J(x,y) \leq  \kappa_2|y-x|^{-d-\beta}.$  We remark here that a virtually identical proof will prove the support theorem under that assumption.





\bibliographystyle{model1b-num-names}
\bibliography{<your-bib-database>}

\begin{thebibliography}{20}


\bibitem{bbck}
M. T. Barlow, R. F. Bass, Z.-Q. Chen, M. Kassmann, Non-local Dirichlet forms and symmetric jump processes,
{Trans. Amer. Math. Soc.} {361} (2009) 1963 - 1999.

\bibitem{bass}
R. F. Bass, {Diffusions and Elliptic Operators},  Springer, New York, 1997. 

\bibitem{bc2} 
R. F. Bass, Z.-Q. Chen, Regularity of harmonic functions for a class of singular stable-like processes, Math. Z. {266} (2010) 489-503.

\bibitem{bkk}
R. F. Bass, M. Kassmann, T. Kumagai, Symmetric jump processes: localization, heat kernels, and convergence,  
{Ann. de l'Institut H. Poincar\'e} 46 (2010) 59-71. 

\bibitem{fot}  M. Fukushima, Y. Oshima, M. Takeda, {Dirichlet Forms and Symmetric Markov Processes}, volume 19 of  de Gruyter Studies in Mathematics,  Walter de Gruyter \& Co., Berlin, 1994.

\bibitem{ks}
N.V. Krylov, M.V. Safonov, A property of the solutions of parabolic equations with measurable coefficients, Izv. Akad. Nauk SSSR Ser. Mat. {44} (1980) 161-175 and 239. 

\bibitem{ss} 
R. Schneider, O. Reichmann, C. Schwab, Wavelet solution of variable order pseudodifferential equations, Calcolo 47 , 2 (2010) 65-101. 


\bibitem{bmw}
B. M. Whitehead, Occupation times for stable-like processes, Potential Anal., to appear.

 \end{thebibliography}



\end{document}